\documentclass[12pt]{article}
\addtolength{\topmargin}{-9mm}
\setlength{\oddsidemargin}{5mm}
\setlength{\evensidemargin}{0mm}
\setlength{\textwidth}{15cm}
\setlength{\textheight}{21.1cm}
\usepackage{amsthm,amsmath,amssymb,color}
\usepackage{array}
\usepackage{ytableau}
\usepackage{kotex}
\usepackage[shortlabels]{enumitem}
\usepackage{mathtools}
\usepackage{subfig}
\usepackage{graphicx}
\usepackage{caption,color}  
\usepackage{epic}
\usepackage{setspace}
\usepackage{float}
\usepackage{tikz}
\usepackage{multirow}
\usepackage{hyperref}
\usepackage{graphics}
\usepackage{mathtools}
\usepackage{nicematrix}
\usepackage{comment}
\usepackage[OT2,OT1]{fontenc}
\usetikzlibrary{graphs,graphs.standard}
\tikzgraphsset{edges={draw,semithick}, nodes={circle,draw,semithick}}
\usepackage{float}
\usepackage[normalem]{ulem}

\newcommand{\ret}{\textcolor{red}}

\newtheorem{thm}{Theorem}[section]
\newtheorem{theorem}[thm]{Theorem}
\newtheorem{lem}[thm]{Lemma}
\newtheorem{prop}[thm]{Proposition}
\newtheorem{problem}{Problem}

\newtheorem{defi}[thm]{Definition}

\theoremstyle{remark}
\newtheorem{case}{Case}
\newtheorem{subcase}{Subcase}[case]

\begin{document}
\begin{spacing}{1.15}
\title{\textbf{{\Large ~Rainbow Hamiltonicity and the spectral radius}} \author{Yuke Zhang\thanks{School of Mathematical Sciences, Zhejiang University of Technology, Hangzhou, 310023, P.R.~China. Email:~zhang\_yk1029@163.com}\and Edwin R. van Dam\thanks{Department of Econometrics and O.R., Tilburg University, Tilburg, 5000 LE, the Netherlands. Email:~Edwin.vanDam@tilburguniversity.edu}}}
\date{}
\maketitle

\begin{abstract} 
Let $\mathcal{G}=\{G_1,\ldots,G_n \}$ be a family of graphs of order $n$ with the same vertex set. A rainbow Hamiltonian cycle in $\mathcal{G}$ is a cycle that visits each vertex precisely once such that any two edges belong to different graphs of $\mathcal{G}$. { We show that if each $G_i$ has more than $\binom{n-1}{2}+1$ edges, then $\mathcal{G}$ admits a rainbow Hamiltonian cycle and pose the problem of characterizing rainbow Hamiltonicity under the condition that all $G_i$ have at least $\binom{n-1}{2}+1$ edges.} 
Towards a solution of that problem, we give a sufficient condition for the existence of a rainbow Hamiltonian cycle in terms of the spectral radii of the graphs in $\mathcal{G}$ and completely characterize the corresponding
extremal graphs.
\\

	\noindent
	\textbf{Keywords:} Hamiltonicity, rainbow, spectral radius \\
	
	\noindent
	\textbf{AMS subject classification 2020:} 05C50

\end{abstract}

\section{Introduction}

A graph $G$ is called \textit{Hamiltonian} if it contains a cycle that visits each vertex of $G$ exactly once. Determining Hamiltonicity of graphs is an old and classic problem in graph theory, which is known to be NP-hard \cite[pp. 85--103]{karp2010reducibility}. 
In 1952, Dirac \cite{dirac1952some}  gave a sufficient condition in terms of the minimum degree, i.e., every graph $G$ with minimum degree at least $\tfrac{|V(G)|}{2}$ contains a Hamiltonian cycle. In 1960, Ore \cite{ore1960note} extended Dirac's condition and proved that every graph $G$ with $\sigma_2(G)\ge |V(G)|$ is Hamiltonian, where $\sigma_2(G)$ is the minimum degree sum among all pairs of nonadjacent vertices of $G$. These two conditions are respectively called \textit{Dirac-type condition} and \textit{Ore-type condition}. 
In 1961, Ore \cite{ore1961arc} derived another well-known condition in terms of the size (the number of edges $e(G)$ of $G$).
\begin{theorem}\cite[Theorem 4.3]{ore1961arc}\label{oresize}
	Let $G$ be a graph of order $n$. If $e(G)> \binom{n-1}{2}+1$, then $G$ is Hamiltonian.
\end{theorem}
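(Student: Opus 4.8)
The plan is to prove the contrapositive: if $G$ is not Hamiltonian, then $e(G)\le\binom{n-1}{2}+1$. First I would reduce to the edge-maximal case. Adding edges to $G$ one at a time, keeping the graph non-Hamiltonian as long as possible, yields a spanning supergraph $G'\supseteq G$ that is itself non-Hamiltonian but such that $G'+uv$ is Hamiltonian for every non-edge $uv$; since $e(G)\le e(G')$ it suffices to bound $e(G')$. As $K_n$ is Hamiltonian for $n\ge 3$, the graph $G'$ is incomplete and thus has at least one non-adjacent pair.

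The core step is Ore's rotation argument. Fix non-adjacent vertices $u,v$ in $G'$. Because $G'+uv$ is Hamiltonian, $G'$ contains a Hamiltonian path $x_1x_2\cdots x_n$ with $x_1=u$ and $x_n=v$. Set $S=\{i:x_1x_{i+1}\in E(G')\}$ and $T=\{i:x_ix_n\in E(G')\}$, both subsets of $\{1,\ldots,n-1\}$ of sizes $d(u)$ and $d(v)$. If some index $i$ lay in $S\cap T$, then $x_1x_{i+1}x_{i+2}\cdots x_nx_ix_{i-1}\cdots x_1$ would be a Hamiltonian cycle, a contradiction; hence $S\cap T=\emptyset$ and
\[
d(u)+d(v)=|S|+|T|\le n-1
\]
for every non-adjacent pair $u,v$.

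Next I would exploit a vertex $u$ of minimum degree $\delta=\delta(G')$. Every non-neighbour $v$ of $u$ is non-adjacent to $u$, hence $d(v)\le n-1-\delta$ by the inequality above; since also $d(v)\ge\delta$, this forces $2\delta\le n-1$. Splitting the vertex set into $u$, its $\delta$ neighbours (each of degree at most $n-1$), and its $n-1-\delta$ non-neighbours (each of degree at most $n-1-\delta$), and summing degrees gives
\[
2e(G')\le \delta+\delta(n-1)+(n-1-\delta)^2=:f(\delta).
\]
The function $f$ is a convex quadratic with minimum at $\delta=(n-2)/2$, so over the admissible integer range $1\le\delta\le (n-1)/2$ its maximum is attained at the endpoint $\delta=1$, where $f(1)=n^2-3n+4=2\bigl(\binom{n-1}{2}+1\bigr)$. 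This yields $e(G')\le\binom{n-1}{2}+1$; the degenerate case $\delta=0$ gives an isolated vertex and hence $e(G')\le\binom{n-1}{2}$ directly.

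The main obstacle is that a crude degree count is too weak on its own: without the restriction $\delta\le (n-1)/2$ the quadratic $f$ would also be large near $\delta=n-2$, overshooting the target. The decisive point is therefore the interplay between choosing $u$ of minimum degree and the bound $d(u)+d(v)\le n-1$, which simultaneously pins $\delta$ below $(n-1)/2$ and caps the degrees of all non-neighbours of $u$. Finally I would confirm sharpness by exhibiting the extremal non-Hamiltonian graph—namely $K_{n-1}$ together with one additional vertex joined to it by a single edge—which has exactly $\binom{n-1}{2}+1$ edges, showing that the threshold in the statement cannot be lowered.
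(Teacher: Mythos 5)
Your proposal is correct, and its core engine is the same one the paper points to: the paper derives Theorem~\ref{oresize} from Ore's degree-sum condition (a non-Hamiltonian graph has a nonadjacent pair with $d(u)+d(v)\le n-1$), and your rotation argument on an edge-maximal non-Hamiltonian supergraph $G'$ is precisely a self-contained proof of that condition, so in that respect the approaches coincide. Where you genuinely diverge is the counting step. Ore's count is immediate and needs no case analysis: for a nonadjacent pair $u,v$ with $d(u)+d(v)\le n-1$, split the edges into those inside $V\setminus\{u,v\}$ and those meeting $\{u,v\}$ to get $e\le\binom{n-2}{2}+d(u)+d(v)\le\binom{n-2}{2}+n-1=\binom{n-1}{2}+1$. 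Your count via a minimum-degree vertex also works, but it costs you a quadratic optimization in $\delta$, and there you left a small hole: convexity of $f(\delta)=\delta^2-(n-2)\delta+(n-1)^2$ only tells you that the maximum over the interval $[1,(n-1)/2]$ is attained at one of the \emph{two} endpoints, and you never compared the right endpoint with the left one. The claim you assert is true: $f(1)-f\bigl(\tfrac{n-1}{2}\bigr)=\tfrac{1}{4}(n-3)(n-5)\ge 0$ for $n\ge 5$ (with equality at $n=5$, which matches the second extremal graph $K_2\vee 3K_1$ of Theorem~\ref{bondysize}), while for $n\le 4$ the integer range degenerates to $\{1\}$; but this verification belongs in the proof, since without it the stated bound does not follow from convexity alone. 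In summary: what your route buys is full self-containedness (no appeal to Ore's degree-sum theorem as a black box); what it costs is an avoidable optimization step that the standard two-vertex edge count, applied to the same pair $u,v$ produced by your rotation lemma, would bypass entirely.
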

{We note that Theorem \ref{oresize} follows from the Ore-type condition \cite{ore1961arc}.}
Denote by $\vee$ and $\cup$ the \textit{join} and \textit{union} of two graphs, respectively. 
Ore \cite{ore1961arc} noted that his result is best possible in that $K_1\vee (K_{n-2}\cup K_1)$ and $K_2\vee 3K_1$ are non-Hamiltonian and have exactly $\binom{n-1}{2}+1$ edges. Bondy \cite{bondy1971} showed that these are the only extremal exceptional graphs.

\begin{theorem}\cite[Theorem 1]{bondy1971}\label{bondysize}
	Let $G$ be a graph of order $n$. If $e(G)\ge \binom{n-1}{2}+1$, then $G$ is Hamiltonian, unless $G \cong K_1\vee (K_{n-2}\cup K_1)$ or $K_2\vee 3K_1$.
\end{theorem}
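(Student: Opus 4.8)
The plan is to reduce everything to the boundary case $e(G)=\binom{n-1}{2}+1$ and then read off the structure of $G$ from equality in a one-line edge count. First I would assume $G$ is non-Hamiltonian (otherwise there is nothing to prove). If we had $e(G)>\binom{n-1}{2}+1$, then Theorem \ref{oresize} would force $G$ to be Hamiltonian; hence $e(G)=\binom{n-1}{2}+1$ exactly. Since $K_n$ is Hamiltonian for $n\ge 3$, our non-Hamiltonian $G$ has at least one nonadjacent pair, so the Ore-type (degree-sum) condition is meaningful and, being non-Hamiltonian, $G$ must violate it: there exist nonadjacent vertices $u,v$ with $d(u)+d(v)\le n-1$.

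Next I would exploit equality. Writing $W=V(G)\setminus\{u,v\}$ (so $|W|=n-2$) and using that $u,v$ are nonadjacent, every neighbour of $u$ and of $v$ lies in $W$, whence
\[
e(G)=e(W)+d(u)+d(v)\le \binom{n-2}{2}+(n-1)=\binom{n-1}{2}+1 .
\]
As $e(G)=\binom{n-1}{2}+1$, both inequalities are tight: $W$ induces a clique $K_{n-2}$ and $d(u)+d(v)=n-1$. Since $u\not\sim v$ forces $d(u),d(v)\le n-2$, we get $d(u)\ge 1$; assume $d(u)\le d(v)$ and put $A=N(u)$, $B=N(v)\subseteq W$, so $|A|+|B|=n-1$. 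Thus $G$ is precisely a clique on $W$ together with two nonadjacent vertices $u,v$ attached to $A$ and $B$, and it only remains to decide Hamiltonicity of this family.

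If $d(u)=1$, then $d(v)=n-2$, so $v$ is adjacent to all of $W$; the unique neighbour $w_0$ of $u$ is then adjacent to every other vertex, and deleting $w_0$ leaves the clique $(W\setminus\{w_0\})\cup\{v\}\cong K_{n-2}$ together with the isolated $u$. Hence $G\cong K_1\vee(K_{n-2}\cup K_1)$, which is non-Hamiltonian because of its degree-one vertex. If instead $d(u)\ge 2$, then $|B|=d(v)\ge (n-1)/2\ge 2$, and in fact $|B|\ge 3$ once $n\ge 6$. Here I would construct a Hamiltonian cycle of the shape
\[
u - a_2 - P - b_1 - v - b_2 - Q - a_1 - u ,
\]
with $a_1,a_2\in A$ distinct, $b_1,b_2\in B$ distinct, and $P,Q$ two paths partitioning $W$ (always available since $W$ is complete). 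The only obstruction is being unable to pick four distinct endpoints, which happens only when $A$ and $B$ overlap heavily; a short split into the cases $|A\cup B|\ge 4$ and $A\subseteq B$ with $|B|=3$ (using any spare vertices of $W\setminus(A\cup B)$ as interior vertices of $P$) yields such a cycle for all $n\ge 6$, and also for $n=5$ unless $A=B$. In the single surviving configuration $n=5$, $d(u)=d(v)=2$, $A=B$, the vertices $u$, $v$, and the unique vertex of $W\setminus A$ are pairwise nonadjacent with common neighbourhood $A$, so $G\cong K_2\vee 3K_1$, which is non-Hamiltonian.

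The routine parts are the edge count and the $d(u)=1$ identification; the genuine work is the final step, where one must verify that the clique-plus-two-vertices graph is Hamiltonian in every configuration except the two listed, choosing the cycle endpoints with care when $A$ and $B$ overlap. I expect that configuration-by-configuration construction, rather than any single deep idea, to be the main obstacle.
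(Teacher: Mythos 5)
The paper never proves this statement: it is quoted verbatim from Bondy's 1971 paper (reference \cite{bondy1971}) and used as a black box, so there is no in-paper proof to compare against. Judged on its own merits, your argument is correct and is the classical route to this refinement: non-Hamiltonicity together with Theorem \ref{oresize} pins down $e(G)=\binom{n-1}{2}+1$; the contrapositive of Ore's degree-sum theorem supplies nonadjacent $u,v$ with $d(u)+d(v)\le n-1$; and the count $e(G)=e(G[W])+d(u)+d(v)\le\binom{n-2}{2}+(n-1)=\binom{n-1}{2}+1$ is tight, forcing $W$ to be a clique and $d(u)+d(v)=n-1$. Your identification of the $d(u)=1$ case with $K_1\vee(K_{n-2}\cup K_1)$ is right, and the extremal configuration you isolate for $d(u)\ge 2$ (namely $n=5$, $A=B$, $|A|=2$, giving $K_2\vee 3K_1$) is exactly the correct one.

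Two points in the final step should be written out rather than asserted. First, the claim that four pairwise distinct endpoints $a_1,a_2\in A$, $b_1,b_2\in B$ exist whenever $|A|,|B|\ge 2$ and $|A\cup B|\ge 4$ needs a short case check (e.g.\ split on whether $|A\setminus B|\ge 2$, $|B\setminus A|\ge 2$, or $|A\cap B|\ge 2$); it does hold. Second, and more substantively, when $|A\cup B|=3$ your template $u\hbox{--}a_2\hbox{--}P\hbox{--}b_1\hbox{--}v\hbox{--}b_2\hbox{--}Q\hbox{--}a_1\hbox{--}u$ with two nonempty paths and four distinct endpoints cannot be realized at all; you must allow one path to degenerate to a single vertex, i.e.\ a cycle $u\hbox{--}x\hbox{--}v\hbox{--}b_2\hbox{--}Q\hbox{--}a_1\hbox{--}u$ with $x\in A\cap B$, $b_2\in B\setminus\{x\}$, $a_1\in A\setminus\{x\}$, $a_1\ne b_2$, and $Q$ a path through the complete graph $W$ covering $W\setminus\{x\}$ (your parenthetical about spare vertices of $W\setminus(A\cup B)$ gestures at this but never says that a pair of endpoints must coincide). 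Since $|A|+|B|=n-1$ and $|A|,|B|\ge 2$, the configurations with $|A\cup B|=3$ are $(|A|,|B|)\in\{(2,2),(2,3),(3,3)\}$, i.e.\ $n\in\{5,6,7\}$, and in each of them such a triple $x,a_1,b_2$ exists except when $n=5$ and $A=B$; the case $|A\cup B|=2$ is precisely $n=5$, $A=B$. With these two verifications filled in, your proof is complete.
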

The \textit{adjacency matrix} of $G$ with vertex set $V(G)$ and edge set $E(G)$ is defined as the matrix $A(G) =(a_{u,v})_{u,v\in  V(G)}$ with $a_{u,v}=1$ if $uv\in E(G) $, and $a_{u,v}=0$ otherwise. The largest eigenvalue of $A(G)$, denoted by $\rho (G)$, is called the \textit{spectral
	radius} of $G$.
In 2010, Fiedler and Nikiforov \cite{fiedler2010spectral} gave a sufficient condition for Hamiltonicity from a spectral perspective:
\begin{theorem}\cite[Theorem 2]{fiedler2010spectral}\label{fnspectral}
	Let $G$ be a graph of order $n$. If
	$\rho(G)>n-2,$
	then $G$ is Hamiltonian unless $G\cong K_1\vee (K_{n-2}\cup K_1).$
\end{theorem}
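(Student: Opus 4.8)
The plan is to prove the statement in contrapositive form: I would assume that $G$ is non-Hamiltonian and deduce that then either $\rho(G)\le n-2$ or $G\cong K_1\vee(K_{n-2}\cup K_1)$, which together with the hypothesis $\rho(G)>n-2$ yields exactly the desired conclusion. The mechanism is to translate the spectral hypothesis into an edge count and then feed it into Bondy's characterization (Theorem \ref{bondysize}). The engine of this translation is the classical Stanley bound
\begin{equation*}
\rho(G)\le\frac{-1+\sqrt{8m+1}}{2},\qquad m=e(G),
\end{equation*}
with equality if and only if $G$ is a disjoint union of a complete graph and isolated vertices.

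First I would dispose of the low-density case $e(G)\le\binom{n-1}{2}$. Substituting $m=\binom{n-1}{2}=\tfrac{(n-1)(n-2)}{2}$ gives $8m+1=(2n-3)^2$, so the Stanley bound produces $\rho(G)\le n-2$, directly contradicting $\rho(G)>n-2$. Hence any non-Hamiltonian $G$ that could threaten the conclusion must satisfy $e(G)\ge\binom{n-1}{2}+1$. For such $G$, Theorem \ref{bondysize} forces $G\cong K_1\vee(K_{n-2}\cup K_1)$ or $G\cong K_2\vee 3K_1$. The second graph has order $n=5$; using the eigenvector that is constant on each of the parts $K_2$ and $3K_1$ I would reduce the eigenvalue equation to the $2\times 2$ quotient system $\rho p=p+3q$ and $\rho q=2p$, giving $\rho^2-\rho-6=0$ and hence $\rho(K_2\vee 3K_1)=3=n-2$. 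Thus this graph also fails $\rho(G)>n-2$ and is excluded, leaving $G\cong K_1\vee(K_{n-2}\cup K_1)$ as the only surviving possibility, which is precisely the permitted exception.

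To confirm that the exception is genuine rather than vacuous, I would observe that $K_1\vee(K_{n-2}\cup K_1)$ is $K_{n-1}$ with a pendant vertex: it is connected and properly contains $K_{n-1}$, so $\rho(G)>\rho(K_{n-1})=n-2$, while the pendant vertex has degree $1$ and hence $G$ is non-Hamiltonian. This closes the argument and shows the exception is sharp.

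The main obstacle—a mild one—is securing a spectral-to-edge inequality whose equality case lands \emph{exactly} at the threshold $n-2$; the Stanley bound is tailor-made for this, since equality there occurs only for $K_{n-1}\cup K_1$, a graph that is itself non-Hamiltonian with $\rho=n-2$ and therefore never interferes with the strict hypothesis $\rho(G)>n-2$. The remaining effort is the routine verification of the two extremal graphs, where the join structure keeps the relevant quotient matrix small and explicit.
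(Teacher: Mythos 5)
Your proof is correct and follows essentially the same route the paper indicates: it remarks that Theorem \ref{fnspectral} follows from Bondy's Theorem \ref{bondysize} via Stanley's inequality $e(G)\ge\rho(G)(\rho(G)+1)/2$, which is exactly your translation of $\rho(G)>n-2$ into $e(G)\ge\binom{n-1}{2}+1$. Your additional details — computing $\rho(K_2\vee 3K_1)=3=n-2$ to rule out that exceptional graph, and verifying $\rho\bigl(K_1\vee(K_{n-2}\cup K_1)\bigr)>n-2$ via the proper subgraph $K_{n-1}$ — correctly fill in what the paper leaves implicit.
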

We note though that this result is weaker than Theorem \ref{bondysize},
for it follows using Stanley's inequality $e(G) \geq \rho(G)(\rho(G)+1)/2$ \cite{Stanley}.
Let $\mathcal{G}=\{G_1,\ldots,G_n \}$ be a family of graphs with the same vertex  set $V$. Informally, we think of $G_i$ as a graph with edges that have color $i$.
Now a graph $H$ is called \textit{rainbow} in $\mathcal{G}$ if any two edges of $H$ belong to different graphs of $\mathcal{G}$; informally, any two edges have a different color. 

For many famous results, rainbow versions have been considered, which led to beautiful results, such as rainbow versions for Carath{\'e}odory's theorem \cite{barany1982generalization}, the Erd{\H{o}}s--Ko--Rado theorem \cite{aharoni2017rainbow}, Mantel's theorem \cite{aharoni2020rainbow}, and more  \cite{bollobas2004multicoloured,das2013rainbow,holmsen2008points,kalai2005topological}. Also rainbow matchings have been considered. In particular, 
Guo et al.~\cite{guo2022spectral} obtained conditions in terms of the spectral radii for the existence of a rainbow matching.

In this paper, we let $H$ be a Hamiltonian cycle, and say that $\mathcal{G}$ is \textit{rainbow Hamiltonian} if it admits such a rainbow Hamiltonian cycle. Here we assume that there are $n$ graphs in $\mathcal{G}$, each on the same vertex set of size $n$.
Chen, Wang, and Zhao \cite{cheng2021rainbow} obtained a rainbow version of Dirac's theorem and showed that if each of the graphs $G_i\in \mathcal{G}$ has smallest degree are least $(1/2+o(1))n$, then $\mathcal{G}$ admits a rainbow Hamiltonian cycle. Joos and Kim \cite{joos2020rainbow} strengthened this result to smallest degree at least $n/2$. Li, Li, and Li \cite{li2023rainbow} investigated the existence of a rainbow Hamiltonian path under an Ore-type condition: if  $\sigma_2(G_i)>  n-2$ for all $i$, then $\mathcal{G}$ admits a rainbow Hamiltonian path \cite[Theorem 1.3]{li2023rainbow}. {A spectral condition for a
rainbow Hamiltonian path was obtained by He, Li and Feng \cite[Theorem 2]{he2024spectral}.}
For other interesting results on rainbow Hamiltonian cycles and related problems we refer the reader to \cite{aharoni2019rainbow,lu2021rainbow,montgomery2021transversal,tang2023rainbow}. 

In this paper, we consider rainbow versions of Theorems \ref{oresize},\ref{bondysize}, and \ref{fnspectral}. Our first main result is the following rainbow version of Ore's size result, thus obtaining a sufficient condition on the sizes of the graphs for the existence of a rainbow Hamiltonian cycle. 
{\begin{theorem}\label{thmsize}
	Let $n\ge 4$ and $\mathcal{G}=\{G_1,G_2,\ldots,G_n\}$ be a family of graphs on the same vertex set $[n]$, with
	\begin{align*}
		e(G_i)>\binom{n-1}{2}+1, \mbox{\ for\ $i=1,2,\ldots,n.$}
	\end{align*}
	Then $\mathcal{G}$ admits a rainbow Hamiltonian cycle.
\end{theorem}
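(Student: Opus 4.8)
\textbf{The plan} is to convert the size hypothesis into an Ore-type degree condition on each $G_i$, invoke the known rainbow Ore theorem for \emph{paths} to obtain a rainbow Hamiltonian path, and then close that path into a cycle by a color-sensitive rotation--extension argument.

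First I would pass to the complements. Since $e(G_i)\ge \binom{n-1}{2}+2$, we get $e(\overline{G_i})=\binom{n}{2}-e(G_i)\le \binom{n}{2}-\binom{n-1}{2}-2=n-3$ for every $i$. Now let $u,v$ be nonadjacent in $G_i$, so $uv\in E(\overline{G_i})$. The edges of $\overline{G_i}$ incident with $u$ or $v$ are exactly $\overline{d}(u)+\overline{d}(v)-1$ distinct edges (the edge $uv$ being counted twice), so $\overline{d}(u)+\overline{d}(v)-1\le e(\overline{G_i})\le n-3$, giving $d_{G_i}(u)+d_{G_i}(v)=2(n-1)-(\overline{d}(u)+\overline{d}(v))\ge n$. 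Hence $\sigma_2(G_i)\ge n$ for every $i$. (As a sanity check, the two extremal graphs of Theorem~\ref{bondysize}, $K_1\vee(K_{n-2}\cup K_1)$ and $K_2\vee 3K_1$, have $\sigma_2=n-1$, so the \emph{strict} size inequality is precisely what buys us $\sigma_2\ge n$ instead of $n-1$.)

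Since $\sigma_2(G_i)\ge n>n-2$ for all $i$, the rainbow Ore theorem for Hamiltonian paths \cite{li2023rainbow} supplies a rainbow Hamiltonian path $P=v_1v_2\cdots v_n$, where the edge $v_jv_{j+1}$ carries a distinct color $\phi_j$; let $c^{*}$ be the unique unused color. If $v_1v_n\in E(G_{c^{*}})$, coloring that edge $c^{*}$ already yields a rainbow Hamiltonian cycle, so assume not. I would then rotate: for an index $i$, rerouting through $v_1v_{i+1}$ and $v_iv_n$ (reversing the segment $v_{i+1}\cdots v_n$) produces the Hamiltonian cycle $v_1v_{i+1}v_{i+2}\cdots v_nv_iv_{i-1}\cdots v_1$, which deletes the path edge $v_iv_{i+1}$, thereby freeing the two colors $\{c^{*},\phi_i\}$, and introduces the two new edges $v_1v_{i+1}$ and $v_iv_n$. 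Such a reroute closes the problem as soon as one of the two pairings assigns $\{c^{*},\phi_i\}$ admissibly to $\{v_1v_{i+1},v_iv_n\}$, i.e.\ each new edge lies in the graph of its assigned color.

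The crux, and where I expect the main difficulty, is the color bookkeeping in this last step: a single free color may be almost unusable at an endpoint (for instance $\overline{G_{c^{*}}}$ could be a star centered at $v_1$, leaving $v_1$ of degree only $2$ in $G_{c^{*}}$), so one cannot hope that an arbitrary crossing is admissibly colorable. I therefore anticipate needing the full P\'osa rotation machinery carried out with colors: maintain the set of rainbow Hamiltonian paths reachable by sequences of admissible rotations, track how the free color evolves, and show that the set of attainable endpoints grows large enough that some reachable path closes with its current free color. The density bound $e(\overline{G_i})\le n-3$ is exactly what provides the slack, since at each stage only at most $n-3$ incidences are forbidden per color, so admissible rotations are plentiful and the two endpoint neighborhoods must overlap in an admissibly colorable pair; combined with $\sigma_2(G_i)\ge n$ this should force a closure, and the strictness of the hypothesis is essential precisely because equality would reintroduce the Bondy obstructions. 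Finally, taking all $G_i$ equal to $K_1\vee(K_{n-2}\cup K_1)$ shows the threshold $\binom{n-1}{2}+1$ cannot be lowered, matching the statement.
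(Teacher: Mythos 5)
Your reduction of the size hypothesis to an Ore-type condition is correct: $e(G_i)\ge\binom{n-1}{2}+2$ does give $e(\overline{G_i})\le n-3$ and hence $\sigma_2(G_i)\ge n$, and invoking the rainbow Ore path theorem of \cite{li2023rainbow} to obtain a rainbow Hamiltonian path $P=v_1\cdots v_n$ with one spare color $c^*$ is legitimate. The genuine gap is the closure step, which you yourself flag as ``the crux'' and never carry out. The obstruction is concrete: the classical Ore closure works inside the single graph $G_{c^*}$, setting $A=\{i: v_1v_{i+1}\in E(G_{c^*})\}$ and $B=\{i: v_iv_n\in E(G_{c^*})\}$ and using $|A|+|B|\ge n$ to produce a crossing $i\in A\cap B$; but in the rainbow setting such a crossing is useless, since both new edges $v_1v_{i+1}$ and $v_iv_n$ would then have to carry the single color $c^*$. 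What you actually need is an index $i$ for which one of the two new edges lies in $G_{\phi_i}$, the graph of the color freed by deleting $v_iv_{i+1}$ --- and no counting argument over the fixed graph $G_{c^*}$ forces this, because the graphs $G_{\phi_i}$ vary with $i$ and nothing prevents every crossing $i\in A\cap B$ from satisfying $v_1v_{i+1}\notin E(G_{\phi_i})$ and $v_iv_n\notin E(G_{\phi_i})$. Your fallback --- ``full P\'osa rotation machinery carried out with colors,'' tracking reachable endpoint sets and evolving free colors --- is precisely the hard technical content of rainbow Hamiltonicity results such as \cite{joos2020rainbow}, not a routine verification; asserting that the density bound ``should force a closure'' is a hope, not a proof.

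For comparison, the paper avoids path-closure entirely and argues by contradiction through the Kelmans transformation: Lemma \ref{shift_ham} shows that a rainbow Hamiltonian cycle in $\mathcal{K}(\mathcal{G})$ pulls back to one in $\mathcal{G}$, Lemma \ref{cltolem} uses the size bound to force a specific set of antidiagonal edges into every threshold graph $\mathcal{K}(G_i)$, and then either an explicit rainbow Hamiltonian cycle can be written down from those forced edges (if some $\mathcal{K}(G_i)$ contains the edge $\{2,n\}$), or every $\mathcal{K}(G_i)$ is a subgraph of $K_1\vee(K_{n-2}\cup K_1)$, contradicting $e(\mathcal{K}(G_i))>\binom{n-1}{2}+1$. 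If you wish to salvage your route, the missing closure lemma --- closing a rainbow Hamiltonian path into a rainbow Hamiltonian cycle under a rainbow Ore condition --- is a substantial result you would have to prove from scratch, not a step that follows from the stated bounds.
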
}

{Next, we would like to strengthen this result, and obtain a rainbow version of Theorem \ref{bondysize}, by allowing $e(G_i) \ge \binom{n-1}{2}+1$. For each $n \ge 6$, there is one exceptional graph, $K_1\vee (K_{n-2}\cup K_1)$, that is not Hamiltonian, and hence the graph family $\mathcal{G}=\{G_1,G_2,\ldots,G_n\}$ with $G_1=\cdots=G_n\cong K_1\vee (K_{n-2}\cup K_1)$ (see Fig.~\ref{extgraphs}) does not admit a rainbow Hamiltonian cycle.}

\begin{figure}[htp]
	\centering
	\includegraphics[scale=0.45] {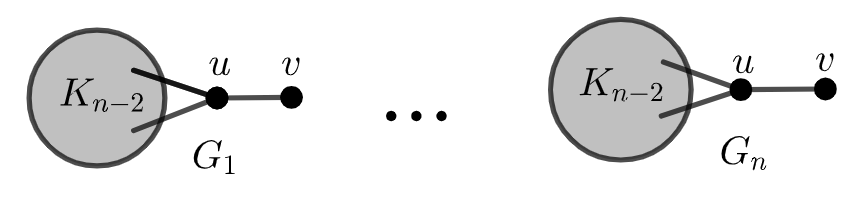}
	\caption{Graph family with  $G_1 = G_2=\cdots= G_n \cong  K_1\vee (K_{n-2}\cup K_1)$ that is not rainbow Hamiltonian.}
	\label{extgraphs}
\end{figure}

{Fortunately, any other graph family with graphs isomorphic to 
$K_1\vee (K_{n-2}\cup K_1)$ is rainbow Hamiltonian, as we shall show in Proposition \ref{extgraphlem}. Thus, we pose the following problem based on Theorems \ref{bondysize} and \ref{thmsize}:
\begin{problem}\label{problem}
	Let {$n\ge 6$} and $\mathcal{G}=\{G_1,G_2,\ldots,G_n\}$ be a family of graphs on the same vertex set $[n]$, with
	\begin{align*}
		e(G_i)\ge \binom{n-1}{2}+1, \mbox{\ for\ $i=1,2,\ldots,n.$}
	\end{align*}
	Does $\mathcal{G}$ admit a rainbow Hamiltonian cycle unless $G_1 = G_2=\cdots= G_n \cong K_1\vee (K_{n-2}\cup K_1)$ ?
\end{problem}}
{We remark that for $n=5$, the family $G_1 = G_2=\cdots= G_5 \cong K_2\vee 3K_1$ is clearly also not rainbow Hamiltonian.}  

{A possible step towards solving Problem \ref{problem} is to replace the size condition by a spectral condition and to obtain a rainbow version of the result by Fiedler and Nikiforov (Theorem \ref{fnspectral}). Indeed, recall that if $\rho(G)>n-2$, then $e(G)\ge \binom{n-1}{2}+1$ by Stanley's inequality.
We note that the spectral radius of $K_1\vee (K_{n-2}\cup K_1)$ is (just slightly) larger than $n-2$, because it has $K_{n-1}$ as a proper subgraph (and it is connected) \cite[Prop.~1.3.10]{cvetkovic2009introduction}. Thus, our second main result is the following.
\begin{theorem}\label{thmrho}
	Let $n\ge 4$ and $\mathcal{G}=\{G_1,G_2,\ldots,G_n\}$ be a family of graphs on the same vertex set $[n]$, with
	\begin{align*}
		\rho(G_i)> n-2,  \mbox{\ for\ $i=1,2,\ldots,n.$}
	\end{align*}
	Then $\mathcal{G}$ admits a rainbow Hamiltonian cycle unless $G_1 = G_2=\cdots= G_n \cong  K_1\vee (K_{n-2}\cup K_1).$
\end{theorem}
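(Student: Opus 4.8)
The plan is to reduce the spectral condition $\rho(G_i) > n-2$ to the already-proven size result, Theorem~\ref{thmsize}, by invoking Stanley's inequality $e(G) \ge \rho(G)(\rho(G)+1)/2$, which is quoted earlier in the excerpt. For each graph, $\rho(G_i) > n-2$ gives $e(G_i) \ge \rho(G_i)(\rho(G_i)+1)/2 > (n-2)(n-1)/2 = \binom{n-1}{2}$. Since $e(G_i)$ is an integer, this yields $e(G_i) \ge \binom{n-1}{2}+1$. This is exactly the hypothesis of Problem~\ref{problem}, not quite the strict inequality needed for Theorem~\ref{thmsize}, so the spectral condition lands me in the boundary regime rather than the strict-inequality regime. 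The whole content of the theorem therefore lives in analyzing precisely which families can have $e(G_i) = \binom{n-1}{2}+1$ for some $i$ while still satisfying $\rho(G_i) > n-2$.

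The next step is a case split according to whether every $G_i$ satisfies the strict inequality $e(G_i) > \binom{n-1}{2}+1$ or whether at least one $G_i$ has $e(G_i) = \binom{n-1}{2}+1$ exactly. In the first case Theorem~\ref{thmsize} applies directly and we are done with no exceptions. In the second case, I would use the characterization of extremal graphs to pin down the structure of each boundary graph. By Bondy's theorem (Theorem~\ref{bondysize}), a graph $G$ of order $n$ with $e(G) = \binom{n-1}{2}+1$ is either Hamiltonian or is isomorphic to $K_1 \vee (K_{n-2}\cup K_1)$ or $K_2 \vee 3K_1$. The key additional spectral input is that the two non-Hamiltonian extremal graphs behave differently under $\rho > n-2$: the excerpt already observes that $K_1 \vee (K_{n-2}\cup K_1)$ has $\rho > n-2$ (it properly contains $K_{n-1}$), while I would check by a direct computation that $K_2 \vee 3K_1$ (the $n=5$ graph) has spectral radius \emph{not} exceeding $n-2 = 3$; indeed its spectral radius is computed from a small quotient matrix and one verifies $\rho(K_2\vee 3K_1) = 1+\sqrt{7} > 3$, so I must instead rule it out using the order restriction, noting that $K_2\vee 3K_1$ only exists at $n=5$ and checking that case separately if $n=5$ is in scope, or observing $n \ge 4$ covers it via the remark that the $n=5$ family is handled by the boundary analysis.

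Assuming the strict-inequality case is settled, the heart of the argument is to show that among the boundary graphs the \emph{only} obstruction to rainbow Hamiltonicity is the fully degenerate family where all $n$ graphs equal $K_1 \vee (K_{n-2}\cup K_1)$. This is precisely the content of Proposition~\ref{extgraphlem}, referenced in the excerpt, which asserts that any family in which the graphs are isomorphic to $K_1 \vee (K_{n-2}\cup K_1)$ but are \emph{not all the identical graph} on $[n]$ is rainbow Hamiltonian. So my proof would invoke that proposition: if some $G_i$ is a non-Hamiltonian boundary graph, then up to the spectral constraint each such $G_i \cong K_1 \vee (K_{n-2}\cup K_1)$, and either they coincide as labelled graphs (the sole exception) or Proposition~\ref{extgraphlem} delivers the rainbow cycle. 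The plausibility of the whole approach rests on how the non-degenerate boundary cases are merged with the strict case: even if only one graph is at the boundary while the rest are strictly above, I need to produce a rainbow cycle, which likely requires re-running the swapping/augmentation argument underlying Theorem~\ref{thmsize} with the extremal graph's structure controlled explicitly.

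The main obstacle I anticipate is precisely this mixed regime: the clean reduction via Stanley's inequality only gets me to $\binom{n-1}{2}+1$ edges, and Theorem~\ref{thmsize} as stated requires a \emph{strict} inequality, so it cannot be applied as a black box whenever even a single graph sits exactly on the boundary. Unless Proposition~\ref{extgraphlem} is strong enough to absorb families with mixed boundary/non-boundary graphs, I would expect to need to reopen the combinatorial core of Theorem~\ref{thmsize}'s proof and verify that a single extremal graph among otherwise edge-rich graphs still admits the required rotation--extension or color-swap, with the degenerate all-equal family emerging as the unique point where that argument fails. Pinning down exactly this failure case, and confirming it matches $K_1 \vee (K_{n-2}\cup K_1)$ repeated $n$ times, is where the real work lies.
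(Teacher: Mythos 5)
You have correctly identified the shape of the difficulty, but your proposal does not resolve it, and the unresolved part is the entire content of the theorem. Stanley's inequality only gives $e(G_i)\ge\binom{n-1}{2}+1$, and your case analysis then stalls: Bondy's theorem (Theorem \ref{bondysize}) tells you that a boundary graph is individually Hamiltonian or isomorphic to $K_1\vee(K_{n-2}\cup K_1)$ or $K_2\vee 3K_1$, but individual Hamiltonicity of the $G_i$ has no direct bearing on rainbow Hamiltonicity of the family, and Proposition \ref{extgraphlem} applies only when \emph{every} graph in the family is isomorphic to $K_1\vee(K_{n-2}\cup K_1)$. Hence a mixed family --- say $G_1\cong K_1\vee(K_{n-2}\cup K_1)$ while $G_2,\ldots,G_n$ have strictly more than $\binom{n-1}{2}+1$ edges, or a family containing a Hamiltonian boundary graph that is not the exceptional one --- is covered by none of the tools you invoke as black boxes, and your final two paragraphs explicitly concede that this regime is ``where the real work lies'' without supplying an argument for it. There is also a computational error: the spectral radius of $K_2\vee 3K_1$ is exactly $3$ (from the quotient matrix with rows $(1,3)$ and $(2,0)$; the paper records this in the proof of Lemma \ref{cltolem}), not $1+\sqrt{7}$, and this exact value is precisely what excludes that graph, since $3\not> n-2$ for $n=5$; no separate ``order restriction'' argument is needed or available.

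The paper closes the gap by not using Theorem \ref{thmsize} as a black box at all: it re-runs the Kelmans-transformation proof of Theorem \ref{thmsize} directly under the spectral hypothesis, which is exactly why Lemma \ref{cltolem} is stated under the disjunctive hypothesis ``$e(G)>\binom{n-1}{2}+1$ or $\rho(G)>n-2$''. Concretely: if $\mathcal{G}$ is not rainbow Hamiltonian, then by Lemma \ref{shift_ham} neither is $\mathcal{K}(\mathcal{G})$, and by Lemma \ref{ktrho} the Kelmans transformation does not decrease the spectral radius, so $\rho(\mathcal{K}(G_i))>n-2$ still holds; the argument of Theorem \ref{thmsize} then shows that each $\mathcal{K}(G_i)$ is a subgraph of the labelled graph $K_1\vee(K_{n-2}\cup K_1)$; Lemma \ref{lem:specradiussubG}, which says that every proper subgraph of $K_1\vee(K_{n-2}\cup K_1)$ has spectral radius at most $n-2$, then forces $\mathcal{K}(G_i)$ to equal that graph for every $i$; and Proposition \ref{extgraphlem} completes the proof. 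Lemmas \ref{ktrho} and \ref{lem:specradiussubG} are exactly the two spectral ingredients your outline lacks: they allow the spectral condition, rather than an edge count, to survive the reduction and to pin down the extremal structure, which is what makes the boundary/mixed regime tractable.
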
}

Our paper is further organized as follows: In Section \ref{sec:preliminaries}, we describe the so-called
Kelmans transformation, which is a well-known technique in extremal graph theory, and show that if a graph family is rainbow Hamiltonian after a Kelmans transformation, then it must be rainbow Hamiltonian itself (Lemma \ref{shift_ham}).
In Section \ref{Sec:size}, we focus on conditions on the size of graphs and prove Theorem \ref{thmsize}. 
In Section \ref{sec:spectralradius}, we obtain conditions on the spectral radii of graphs for the existence of a rainbow Hamiltonian cycle, and prove Theorem \ref{thmrho}.
In the final section, we obtain a similar result in terms of the signless Laplacian spectral radii (Theorem \ref{thmsignless}).

\section{The Kelmans transformation}\label{sec:preliminaries}

In extremal graph theory, one of the most important and widely used tools is the Kelmans transformation, as it controls efficiently many graph parameters. In this section
we will state and prove some results about this transformation that are relevant to obtain our main results. For more about the Kelmans transformation, we refer the reader to \cite{csikvari2011applications,kelmans1981}.
Let $x , y\in V(G)$, and let $N_G(x)$ and $N_G(y)$ denote the neighborhoods in $G$ of vertices $x$ and $y$, respectively.

\begin{defi}\cite[Kelmans transformation]{kelmans1981}
	The Kelmans transformation on $G$ from $x$ to $y$ produces a new graph $\mathcal{K}_{x y}(G)$ by erasing all edges between $y$ and
	$N_G(y)\backslash (N_G(x)\cup \{x\})$ and adding all edges between $x$ and $N_G(y)\backslash (N_G(x)\cup \{x\})$.
\end{defi}

Thus, the Kelmans transformation does not change the number of edges. However, it does affect 
the spectral radius $\rho (G)$.

\begin{lem}\cite[Theorem 2.1]{csikvari2009conjecture} \label{ktrho}
	Let $G$ be a graph and $x, y\in V(G)$. Then $\rho(\mathcal{K}_{xy}(G)) \ge \rho(G)$.
\end{lem}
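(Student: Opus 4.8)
The plan is to use the Rayleigh-quotient characterization of the spectral radius together with a Perron eigenvector of $G$. Let $A=A(G)$ and let $\mathbf{v}$ be a nonnegative unit eigenvector of $A$ associated with $\rho(G)$, which exists by Perron--Frobenius; then $\rho(G)=\mathbf{v}^{\top}A\mathbf{v}=2\sum_{uw\in E(G)}v_uv_w$ and $v_u\ge 0$ for all $u$. Write $S=N_G(y)\setminus(N_G(x)\cup\{x\})$ for the set of neighbours of $y$ whose incident edges are moved. By definition the transformation deletes each edge $yz$ and inserts each edge $xz$ for $z\in S$, and leaves all other edges (including $xy$, since $x\notin S$) unchanged. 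Hence, evaluating the quadratic form of $A'=A(\mathcal{K}_{xy}(G))$ at the \emph{same} vector $\mathbf{v}$,
\[
\mathbf{v}^{\top}A'\mathbf{v}-\mathbf{v}^{\top}A\mathbf{v}=2\sum_{z\in S}(v_x-v_y)v_z=2(v_x-v_y)\sum_{z\in S}v_z.
\]

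If $v_x\ge v_y$, then since each $v_z\ge 0$ the right-hand side is nonnegative, so that
\[
\rho(\mathcal{K}_{xy}(G))\ge \mathbf{v}^{\top}A'\mathbf{v}\ge \mathbf{v}^{\top}A\mathbf{v}=\rho(G),
\]
where the first inequality uses that the largest eigenvalue of the symmetric matrix $A'$ dominates its Rayleigh quotient at the unit vector $\mathbf{v}$. This settles the case $v_x\ge v_y$.

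The main obstacle is the opposite case $v_x<v_y$, where the displayed difference is negative and the estimate fails. I would resolve it through the symmetry of the construction: inspecting the four possible adjacency patterns of a vertex $z\notin\{x,y\}$ to the pair $(x,y)$ shows that in $\mathcal{K}_{xy}(G)$ the pattern $z\sim y,\ z\not\sim x$ is converted to $z\sim x,\ z\not\sim y$, while in $\mathcal{K}_{yx}(G)$ the mirror conversion occurs; consequently the map interchanging $x$ and $y$ and fixing every other vertex is an isomorphism $\mathcal{K}_{yx}(G)\cong\mathcal{K}_{xy}(G)$. In particular the two transformed graphs are cospectral, so $\rho(\mathcal{K}_{xy}(G))=\rho(\mathcal{K}_{yx}(G))$. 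Now apply the computation above to $\mathcal{K}_{yx}(G)$, with the roles of $x$ and $y$ exchanged and $S'=N_G(x)\setminus(N_G(y)\cup\{y\})$, to obtain $\mathbf{v}^{\top}A(\mathcal{K}_{yx}(G))\mathbf{v}-\rho(G)=2(v_y-v_x)\sum_{z\in S'}v_z\ge 0$, which is nonnegative precisely because now $v_y\ge v_x$. Combining, $\rho(\mathcal{K}_{xy}(G))=\rho(\mathcal{K}_{yx}(G))\ge\rho(G)$, completing this case and the proof.
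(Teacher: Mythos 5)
Your proof is correct. Note that the paper does not prove this lemma at all---it is quoted from the reference (Csikv\'ari, Theorem 2.1)---and your argument, evaluating the Rayleigh quotient of $A(\mathcal{K}_{xy}(G))$ at the Perron vector of $G$ and using the isomorphism $\mathcal{K}_{xy}(G)\cong\mathcal{K}_{yx}(G)$ (which the paper also records) to dispose of the case $v_x<v_y$, is exactly the standard proof given in that cited source.
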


Note that $\mathcal{K}_{xy}(G)\cong \mathcal{K}_{yx}(G)$, where we denote $G_1 \cong G_2$ if $G_1$ and $G_2$ are isomorphic. We write $G_1 = G_2$ if $V (G_1) = V (G_2)$ and $E(G_1) = E(G_2)$.

For a positive integer $n$, we use $[n]$ to denote the set $\{1, 2, \ldots ,n \}$. Let $G$ be a graph with vertex set $[n]$.
Iterating the Kelmans transformation on $G$ for all vertex pairs $ (x, y)$ satisfying $x < y$ will eventually produce a so-called threshold graph, which is invariant under all such Kelmans transformations. {It is well known that the obtained threshold graph, which we denote by $\mathcal{K}(G)$, is independent of the order in which the transformations are performed (and in fact, Frankl \cite{franklshifting} showed that at most $\binom{n}{2}$ iterations are necessary).} It follows that $N_{\mathcal{K}(G)}(y)\backslash \{x\} \subseteq  N_{\mathcal{K}(G)}(x)\backslash \{y\}$ for each $x<y$. We will use this property mostly as follows.

\begin{lem}\label{lem:kelordering}
	Let $x<y$ and $z \neq x,y$. If $xz \notin E(\mathcal{K}(G))$, then $yz \notin E(\mathcal{K}(G))$.
\end{lem}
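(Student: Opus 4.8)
The plan is to prove the statement by contraposition, reducing it to the neighbourhood-containment property $N_{\mathcal{K}(G)}(y)\setminus\{x\} \subseteq N_{\mathcal{K}(G)}(x)\setminus\{y\}$ recorded for all $x<y$ in the paragraph immediately preceding the lemma. Concretely, I would assume $yz \in E(\mathcal{K}(G))$ and deduce $xz \in E(\mathcal{K}(G))$, which is exactly the contrapositive of what is asserted.

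First I would rewrite the hypothesis and conclusion in terms of neighbourhoods: $xz \notin E(\mathcal{K}(G))$ means $z \notin N_{\mathcal{K}(G)}(x)$, and $yz \notin E(\mathcal{K}(G))$ means $z \notin N_{\mathcal{K}(G)}(y)$. For the contrapositive, suppose $z \in N_{\mathcal{K}(G)}(y)$. Since $z \neq x$ by assumption, we have $z \in N_{\mathcal{K}(G)}(y)\setminus\{x\}$, and applying the containment property gives $z \in N_{\mathcal{K}(G)}(x)\setminus\{y\} \subseteq N_{\mathcal{K}(G)}(x)$, i.e.\ $xz \in E(\mathcal{K}(G))$, contradicting the hypothesis. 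The excluded singletons $\{x\}$ and $\{y\}$ are accounted for precisely by the assumption $z \neq x,y$, so no edge case is lost.

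If one prefers a self-contained argument that does not take the displayed containment as given, I would re-derive it from the defining invariance of the threshold graph: since $\mathcal{K}(G) = \mathcal{K}_{xy}(\mathcal{K}(G))$ for every $x<y$, the Kelmans transformation from $x$ to $y$ erases no edges, which forces $N_{\mathcal{K}(G)}(y)\setminus(N_{\mathcal{K}(G)}(x)\cup\{x\}) = \emptyset$; deleting $x$ from both sides then yields the inclusion. I do not expect a genuine obstacle here, as the content of the lemma is just the translation of a neighbourhood containment into the language of non-edges; the only point requiring minor care is the bookkeeping of the singletons $\{x\}$ and $\{y\}$, which is exactly why the hypothesis $z \neq x,y$ is needed.
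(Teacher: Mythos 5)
Your proof is correct and takes essentially the same route as the paper: the paper gives no separate proof of this lemma, treating it as an immediate restatement of the containment $N_{\mathcal{K}(G)}(y)\setminus\{x\} \subseteq N_{\mathcal{K}(G)}(x)\setminus\{y\}$ stated just before it, which is exactly what your contrapositive argument makes explicit. Your optional re-derivation of that containment from the invariance $\mathcal{K}(G)=\mathcal{K}_{xy}(\mathcal{K}(G))$ is also sound and matches how the paper justifies the containment (via the threshold graph being fixed by all such transformations).
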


Given a family $\mathcal{G}$, we let
$\mathcal{K}_{x y}(\mathcal{G})=\{ \mathcal{K}_{x y}({G}): G\in \mathcal{G}\}$ and
$\mathcal{K}(\mathcal{G})=\{ \mathcal{K}({G}): G\in \mathcal{G}\}.$
The next result shows that if the Kelmans transformed graph family $\mathcal{K}(\mathcal{G})$ is rainbow Hamiltonian then so is $\mathcal{G}$.
\begin{lem}\label{shift_ham}
	Let $\mathcal{G}=\{G_1,G_2,\ldots,G_n \}$ be a family of graphs on vertex set $[n]$. If $\mathcal{K}(\mathcal{G})$ admits a rainbow Hamiltonian cycle, then so does $\mathcal{G}$.
\end{lem}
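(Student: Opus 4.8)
The plan is to reduce the statement to a single Kelmans transformation and then iterate. Since $\mathcal{K}(G)$ is reached from $G$ by finitely many elementary transformations $\mathcal{K}_{xy}$ (with $x<y$), and applying a fixed sequence of such transformations uniformly to every graph of $\mathcal{G}$ eventually stabilises each $G_i$ at its threshold graph $\mathcal{K}(G_i)$, there is a finite list of pairs $(x_1,y_1),\dots,(x_m,y_m)$ with $\mathcal{G}_0=\mathcal{G}$, $\mathcal{G}_k=\mathcal{K}_{x_ky_k}(\mathcal{G}_{k-1})$, and $\mathcal{G}_m=\mathcal{K}(\mathcal{G})$. Hence it suffices to prove the \emph{one-step claim}: if $\mathcal{K}_{xy}(\mathcal{G})$ admits a rainbow Hamiltonian cycle, then so does $\mathcal{G}$. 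Applying this claim backwards along $\mathcal{G}_m,\mathcal{G}_{m-1},\dots,\mathcal{G}_0$ then yields the lemma.

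For the one-step claim, fix $(x,y)$, write $G_i'=\mathcal{K}_{xy}(G_i)$ and $S_i=N_{G_i}(y)\setminus(N_{G_i}(x)\cup\{x\})$, so that $G_i'$ arises from $G_i$ by deleting the edges $yz$ and adding the edges $xz$ for $z\in S_i$. Let $C'$ be a rainbow Hamiltonian cycle of $\mathcal{K}_{xy}(\mathcal{G})$ with color function $\phi$ (edge $e$ lies in $G'_{\phi(e)}$); as $|E(C')|=n$, the map $\phi$ is a bijection onto $[n]$. Only edges incident with $x$ or $y$ can differ between $G_i$ and $G_i'$, so I will repair $C'$ by a modification near $x$ and $y$. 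For a cycle-edge $e$ at $x$ with other endpoint $z\neq y$ and color $i=\phi(e)$, call it \emph{shared}, \emph{private}, or \emph{added} according as $z\in N_{G_i}(x)\cap N_{G_i}(y)$, $z\in N_{G_i}(x)\setminus N_{G_i}(y)$, or $z\in S_i$; these are the only possibilities. A direct check shows two clean regimes: if no edge at $x$ is added, then $C'$ itself is a rainbow Hamiltonian cycle of $\mathcal{G}$ under the same $\phi$; and if no edge at $x$ is private, then the cycle $\sigma(C')$ obtained by interchanging the labels $x$ and $y$ (and transporting colors along $\sigma$) works, since each cycle-edge $yz$ satisfies $z\in N_{G_{\phi}}(x)$ and so becomes a valid edge $xz$.

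The main obstacle is the one remaining case, where one edge at $x$ is private and the other is added, so that neither $C'$ nor its swap works and a genuine reroute is needed. Here $x,y$ are non-adjacent on $C'$, say $C'=x,q,P_1,r,y,s,P_2,p,x$, where $xp$ is private (color $a$, so $xp\in G_a$ but $yp\notin G_a$), $xq$ is added (color $b$, so $q\in S_b$, whence $yq\in G_b$), and the two edges $yr$ (color $c$) and $ys$ (color $c'$) at $y$ are forced to be shared, giving $xr\in G_c$ and $xs\in G_{c'}$. I then form the cycle $x,p,\overline{P_2},s,y,q,P_1,r,x$ (with $\overline{P_2}$ the reverse of $P_2$), keeping the colors of $P_1$ and $P_2$ and assigning $a,b,c',c$ to $xp,yq,sy,rx$ respectively. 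Every edge is then a genuine edge of its color class in $\mathcal{G}$, and all $n$ colors occur exactly once, so this is a rainbow Hamiltonian cycle of $\mathcal{G}$.

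The delicate point is precisely this mixed case: one must verify that the reroute simultaneously restores edge-validity at both $x$ and $y$ and preserves the bijectivity of the coloring, and that the degenerate configurations $q=r$ or $p=s$ (where $P_1$ or $P_2$ collapses) reduce to the same three valid edges. Once the one-step claim is established in all three cases, the reduction of the first paragraph completes the proof of Lemma~\ref{shift_ham}.
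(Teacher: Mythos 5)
Your proposal is correct and follows essentially the same strategy as the paper's proof: reduce to a single Kelmans transformation $\mathcal{K}_{xy}$, exploit the key fact that every neighbor of $y$ (other than $x$) in $\mathcal{K}_{xy}(G_i)$ is a common neighbor of $x$ and $y$ in $G_i$, and repair the cycle by local switches at $x$ and $y$. Your organization by edge types at $x$ (shared/private/added, giving the identity, the full $x\leftrightarrow y$ swap, and the crossing reroute) is just a repackaging of the paper's case analysis on whether $xy$ lies on the cycle, and the resulting switches coincide with those in the paper.
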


\begin{proof}
	It suffices to prove that for arbitrary vertices $x,y\in  V(G)$, if $\mathcal{K}_{xy}(\mathcal{G})$ admits a rainbow Hamiltonian cycle, then so does $\mathcal{G}$. Let $C^R$ be a rainbow Hamiltonian cycle of $\mathcal{K}_{xy}(\mathcal{G})$ where $E(C^R)=\{e_i: e_i\in \mathcal{K}_{xy}(G_i),  1\le i \le n \}$.

	\setcounter{subcase}{0}
	\begin{case}
		$xy\in  E(C^R)$.
	\end{case}
	Suppose  that  $e_i=ux$ and $e_j=yv$ are two other edges in $C^R$. Because $v$ is a neighor of $y$ in ${K}_{xy}(G_j)$, $v$ must be a common neighbor of $x$ and $y$ in both $\mathcal{K}_{xy}(G_j)$ and $G_j$. Thus,  $e_j=yv\in E(G_j)$ and $xv\in E(G_j)$. 
	If $uy\in E(G_i)$, then we find a rainbow Hamiltonian cycle of $\mathcal{G}$ by replacing $e_i$ and $e_j$ by $uy$ and $xv$ (see Fig. \ref{lmm2.5switch}). If  $uy\notin E(G_i)$, then $ux\in E(G_i)$ (otherwise, $u\notin N_{G_i}(x)$ and $u\notin N_{G_i}(y)$ and then $e_i=ux\notin E(\mathcal{K}_{xy}(G_i)) $, a contradiction), so  $C^R$ is also a rainbow Hamiltonian cycle of $\mathcal{G}$.
	
	\begin{case}
		$xy\notin  E(C^R)$.
	\end{case}
	Let $e_i=a_1x$, $e_{i+1}=xa_2$, $e_j=b_1y$, and $e_{j+1}=yb_2$  be edges in $C^R$ (where $a_1,a_2,b_1,$ and $b_2 $ are not all necessarily distinct). Similar as in the above case, it follows that 
	$b_1$ is a common neighbor of $x$ and $y$ in both $\mathcal{K}_{xy}(G_j)$ and $G_j$, so that $b_1x, b_1y \in  E(G_j)$. Similarly, we have $xb_2, yb_2 \in E(G_{j+1})$. 
	\begin{subcase} \label{exist1}
		$ya_1\notin E(G_i) $.
	\end{subcase}
	If $ya_1\notin E(G_i) $, then $a_1x\in E(G_i) $ (otherwise $e_i=a_1x\notin E(\mathcal{K}_{xy}(G_i)) $, a contradiction). (i) If $xa_2 \in E(G_{i+1})$, then $C^R$ is a rainbow Hamiltonian cycle of $\mathcal{G}$; (ii) If $xa_2 \notin E(G_{i+1})$, then $ya_2\in E(G_{i+1})$ (otherwise $e_{i+1}=xa_2\notin E(\mathcal{K}_{xy}(G_{i+1})) $, a contradiction), and we obtain a rainbow Hamiltonian cycle by replacing $e_{i+1}$ by $ya_2$ and $e_{j}$ by $b_1x$ (see Fig. \ref{lmm2.5switch}).
	
	\begin{subcase} \label{exist2}
		$ya_1\in E(G_i) $.
	\end{subcase}
	If $ya_1\in E(G_i) $, then we first replace $e_{i}$ by $ya_1$ and $e_{j+1}$ by $xb_2$.  (i) If $xa_2 \in E(G_{i+1})$, then this gives a rainbow Hamiltonian cycle of $\mathcal{G}$ (see Fig. \ref{lmm2.5switch}); (ii) If $xa_2 \notin E(G_{i+1})$, then  $ya_2\in E(G_{i+1})$ (as before) and we also replace $e_{i+1}$ by $ya_2$ and $e_{j}$ by $b_1x$ (see Fig. \ref{lmm2.5switch}) to obtain also in this final case a rainbow Hamiltonian cycle of $\mathcal{G}$.
\end{proof}

\begin{figure}[ht]
	\centering
	\includegraphics[scale=0.45] {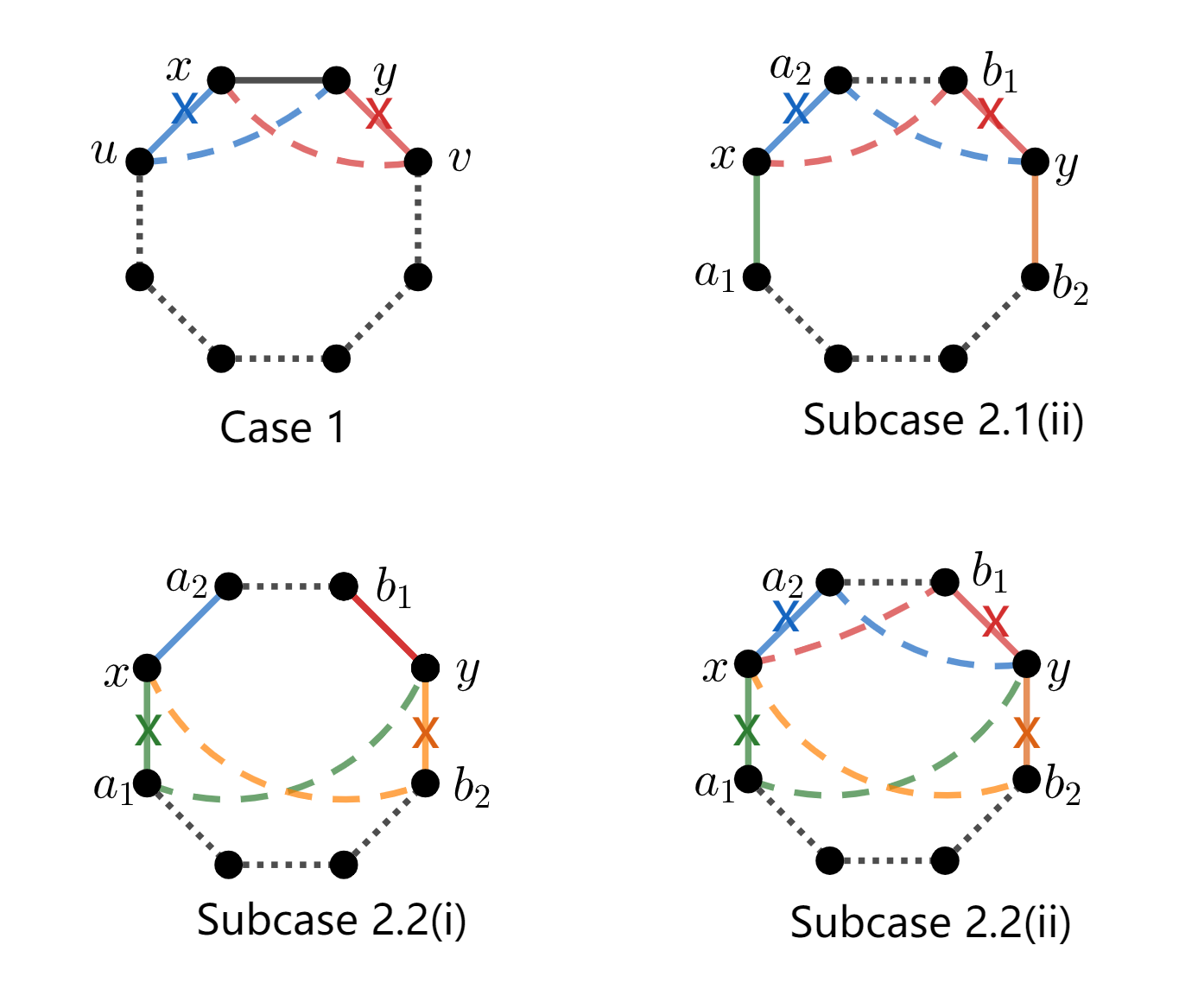}
	\caption{Switches in Lemma \ref{shift_ham}}
	\label{lmm2.5switch}
\end{figure}

We observe that the reverse of Lemma \ref{shift_ham} does not necessarily hold. For example, the family of graphs $G_1=\cdots=G_4\cong C_4$ has a rainbow Hamiltonian cycle, but $\mathcal{K}(G_1)=\cdots=\mathcal{K}(G_4)\cong K_1\vee (K_2\cup K_1)$ does not admit a rainbow Hamiltonian cycle.

{In order to prove Theorem \ref{thmsize} and Theorem \ref{thmrho}, we also use the following lemma. For $n\ge 4$, we define $e_j=\{j,n+2-j\}$ and $e_k'=\{k,n+1-k\}$ for $2\le j\le \lceil \tfrac{n}{2} \rceil$ and $1\le k\le \lfloor \tfrac{n}{2} \rfloor$.}

\begin{lem}\label{cltolem}
    Let {$n\ge 4$} and $G$ be a graph on vertex set $[n]$, with $e(G)> \binom{n-1}{2}+1$ or $\rho(G)>n-2$. Then 
    \begin{enumerate}[(i)]
        \item $\{e_3,\ldots, e_{\lceil \tfrac{n}{2} \rceil} \}\subseteq E(\mathcal{K}(G))$; \label{cltolem1}
        \item $\{e_1',\ldots, e_{\lfloor \tfrac{n}{2} \rfloor}' \}\subseteq E(\mathcal{K}(G))$. \label{cltolem2}
    \end{enumerate}
\end{lem}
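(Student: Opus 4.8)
The plan is to work throughout with the threshold graph $H:=\mathcal{K}(G)$. Since the Kelmans transformation preserves the number of edges and does not decrease the spectral radius (Lemma \ref{ktrho}), the hypothesis passes to $H$: under the size assumption $e(H)=e(G)\ge\binom{n-1}{2}+2$, so $H$ has at most $n-3$ non-edges, while under the spectral assumption $\rho(H)\ge\rho(G)>n-2$, so by Stanley's inequality $e(H)\ge\binom{n-1}{2}+1$ and $H$ has at most $n-2$ non-edges. The engine of the whole argument is the following \emph{up-closedness} of the non-edges of $H$, read off from Lemma \ref{lem:kelordering}: if $\{p,q\}\notin E(H)$ with $p<q$, then $\{p',q'\}\notin E(H)$ for every $p'\ge p$ and $q'\ge q$ with $p'\ne q'$ (raise $q$ to $q'$ taking $z=p$, then raise $p$ to $p'$ taking $z=q'$).

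For (\ref{cltolem1}) I argue by contradiction: suppose $e_j=\{j,n+2-j\}\notin E(H)$ for some $3\le j\le\lceil n/2\rceil$, where $j<n+2-j$. By up-closedness every pair $\{a,b\}$ with $j\le a<b\le n$ and $b\ge n+2-j$ is a non-edge; these form a clique-block on $\{n+2-j,\dots,n\}$ together with its join to $\{j,\dots,n+1-j\}$, so their number is exactly $f(j)=\tfrac12(j-1)(2n-3j+2)$. A short check shows $f$ is concave in $j$ with $f(3)=2n-7$, so on the whole range $f(j)\ge n-2$, with equality only at $(n,j)=(5,3)$. Under the size assumption this already exceeds the bound of $n-3$ non-edges. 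Under the spectral assumption it exceeds the bound of $n-2$ non-edges whenever $f(j)>n-2$; in the single remaining case $n=5$, $j=3$, the forced block has size exactly $n-2$, so up-closedness makes it the entire non-edge set and $H\cong K_2\vee 3K_1$, whose spectral radius equals $3=n-2$, contradicting $\rho(H)>n-2$.

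For (\ref{cltolem2}) I first treat $e_1'=\{1,n\}$: were it a non-edge, up-closedness (raising the first coordinate with $z=n$) would isolate $n$, whence $e(H)\le\binom{n-1}{2}$ and $\rho(H)\le n-2$, contradicting either hypothesis. For $2\le k\le\lfloor n/2\rfloor$ I deduce $e_k'=\{k,n+1-k\}$ from part (\ref{cltolem1}) and up-closedness: indeed $e_k'$ is coordinatewise dominated by $e_{k+1}=\{k+1,n+1-k\}\in E(H)$ whenever $k+1\le\lceil n/2\rceil$, and for even $n=2m$ the one leftover edge $e_m'=\{m,m+1\}$ is dominated by $e_m=\{m,m+2\}\in E(H)$ (using $m\ge 3$); the only genuinely small case, $n=4$ (where this domination is unavailable for $e_2'=\{2,3\}$), is settled directly by the same counting as in (\ref{cltolem1}).

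The main obstacle is the spectral assumption: unlike the size assumption it yields no strict surplus of edges, so the pure counting only gives $f(j)\ge n-2$ and breaks down precisely at the boundary. Overcoming this requires pinning down the unique extremal threshold graph that appears when the forced block has size exactly $n-2$ and computing its spectral radius; recognizing that this happens only for $n=5$, giving $K_2\vee 3K_1$ with $\rho=n-2$, is the delicate point, the remainder being bookkeeping with the up-closedness of non-edges.
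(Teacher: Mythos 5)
Your proof is correct and follows essentially the same route as the paper: the same forced non-edge count via Lemma \ref{lem:kelordering} (your $f(j)$ is exactly the paper's quantity $(n+2-2j)(j-1)+\binom{j-1}{2}$), the same exceptional case $(n,j)=(5,3)$ resolved through $K_2\vee 3K_1$ having spectral radius $n-2$, and the same isolated-vertex argument for $e_1'$. The only cosmetic differences are your concavity/endpoint check in place of the paper's algebraic factorization, and your treatment of $e_2'$ (and the even-$n$ middle edge) by coordinatewise domination from edges supplied by part (\ref{cltolem1}), with $n=4$ done by hand, where the paper instead uses a separate subgraph count via $K_1\vee(K_{n-3}\cup 2K_1)$.
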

\begin{proof}
Recall that if $\rho(G)>n-2$, then $e(G)\ge \binom{n-1}{2}+1$ by Stanley's inequality. Thus, in any case, $e(\mathcal{K}(G))=e(G)\ge  \binom{n-1}{2}+1$.

(i) Suppose that this claim does not hold, say $e_j \notin E(\mathcal{K}(G))$ for some $j$ with $3\le j \le \lceil \tfrac{n}{2} \rceil$. Then it follows from Lemma \ref{lem:kelordering} that if $z \ge j$ and $w \ge n+2-j$, then $\{z, w\} \notin  E(\mathcal{K}(G))$. If $(n,j) \neq (5,3)$, then it follows that $e(\mathcal{K}(G)) \le \binom{n}{2}-(n+2-2j)(j-1)-\binom{j-1}{2}= \binom{n-1}{2}+1 -(n-\tfrac{3}{2}j-\tfrac{1}{2})(j-2)<\binom{n-1}{2}+1$, a contradiction. For the case that $n=5$ and $j=3$, the above shows that $\mathcal{K}(G)$ is a subgraph of $K_{2}\vee 3K_1$, which has $7$ edges and spectral radius $3$, thus contracting the assumption that $e(G)> \binom{n-1}{2}+1$ or $\rho(G)>n-2$. Thus, \ref{cltolem1} holds.

(ii) It follows from \ref{cltolem1}  and Lemma \ref{lem:kelordering} that $e_k' \in E(\mathcal{K}(G))$ for $k \ge 3$. 
If $ e_1'=\{1,n\} \notin E(\mathcal{K}(G))$, then $n$ is an isolated vertex, and hence $\mathcal{K}(G)$ has size at most $\binom{n-1}{2}$, a contradiction.
	If $ e_2'=\{2,n-1\} \notin E(\mathcal{K}(G))$, then it follows that 
	$\mathcal{K}(G_i)$ is a subgraph of $K_1\vee (K_{n-3}\cup 2K_1)$, which has size 
    smaller than $\binom{n-1}{2}+1$,  a contradiction. Thus, also \ref{cltolem2} holds. 
\end{proof}

\section{Size conditions for rainbow Hamiltonicity} \label{Sec:size}

We are now ready to {prove} our first main result, which is a sufficient condition in terms of the size for rainbow Hamiltonicity of a graph family; a rainbow version of Ore's size condition (Theorem \ref{oresize}).
\begin{proof}[\textbf{Proof of Theorem \ref{thmsize}}]
	Assume (for a proof by contradiction) that $\mathcal{G}$ does not admit a rainbow Hamiltonian cycle.
	Then by Lemma \ref{shift_ham}, the family  $\mathcal{K}(\mathcal{G})$ does not admit a rainbow Hamiltonian cycle either, and $$e(\mathcal{K}(G_i))=e(G_i)> \binom{n-1}{2}+1 \ \mbox{for $i=1,2,\ldots,n.$}$$ 
Let $e_j=\{j,n+2-j\}$   for $2\le j\le \lceil \tfrac{n}{2} \rceil$ and $e_k'=\{k,n+1-k\}$ for $1\le k\le \lfloor \tfrac{n}{2} \rfloor$. 
{Then $\{e_3,\ldots, e_{\lceil \tfrac{n}{2} \rceil} \}\subseteq E(\mathcal{K}(G_i))$ and  $\{e_1',\ldots, e_{\lfloor \tfrac{n}{2} \rfloor}' \}\subseteq E(\mathcal{K}(G_i))$ for $i=1,2,\ldots,n$ by Lemma \ref{cltolem}.}

We now claim that $e_2 \notin E(\mathcal{K}(G_i))$ for $i=1,2,\ldots,n.$
In order to prove this claim by contradiction, we assume that $e_2 \in E(\mathcal{K}(G_i))$. Now it is relatively easy to construct a rainbow Hamiltonian cycle using this edge (with color $i$) and the edges $e_j$ $(3 \le j \le  \lceil \tfrac{n}{2} \rceil)$ and $e_k'$ $(1 \le k \le  \lfloor \tfrac{n}{2} \rfloor)$ (all of which we can color as we wish). In addition, we use an edge $\{j,1\}$ (which is an edge in all $\mathcal{K}(G_i)$ because $e_1'$ is so). For $n$ even, the cycle (indicated by its edges for clarity) is
	$$e_2\rightarrow e_2'\rightarrow e_3\rightarrow e_3'\rightarrow \cdots \rightarrow e_{\lceil \tfrac{n}{2} \rceil}\rightarrow e_{\lfloor \tfrac{n}{2} \rfloor}'\rightarrow \{n+1-\lfloor \tfrac{n}{2} \rfloor,1\}\rightarrow e_1'(\rightarrow e_2).$$
	For $n$ odd, the cycle is 
	$$e_2\rightarrow e_2'\rightarrow e_3\rightarrow e_3'\rightarrow \cdots \rightarrow e_{\lceil \tfrac{n}{2} \rceil}\rightarrow \{\lceil \tfrac{n}{2} \rceil,1\}\rightarrow e_1'(\rightarrow e_2).$$
	Thus, we have a contradiction, and the claim is proven.
	
	Now it follows from this last claim that vertex $n$ can only be adjacent to vertex $1$ in $\mathcal{K}(G_i)$ and hence $\mathcal{K}(G_i)$ is a subgraph of $K_{1}\vee (K_{n-2}\cup K_1)$. Thus, $\mathcal{K}(G_i)$ has at most $\binom{n-1}{2}+1$ edges, which is our final contradiction.
\end{proof}

We obtained a size condition of the rainbow Hamiltonicity of a graph family in Theorem \ref{thmsize} but did not characterize the extremal graphs. The reason is that the Kelmans transformation does not change the sizes of graphs, and we are not able to trace back the original graphs in $\mathcal{G}$ from $\mathcal{K}(G)$ by our approach. 

As mentioned in the introduction, if we allow $e(G_i) \ge \binom{n-1}{2}+1$, then for each $n \ge 6$, there is one exceptional graph, $K_1\vee (K_{n-2}\cup K_1)$, that is not Hamiltonian, and hence the graph family $\mathcal{G}=\{G_1,G_2,\ldots,G_n\}$ with $G_1=\cdots=G_n\cong K_1\vee (K_{n-2}\cup K_1)$ (see Fig.~\ref{extgraphs}) does not admit a rainbow Hamiltonian cycle. However, any other graph family with graphs isomorphic to $K_1\vee (K_{n-2}\cup K_1)$ is rainbow Hamiltonian, as we shall show next.

\begin{prop}\label{extgraphlem}
	Let {$n \ge 4$} and $\mathcal{G}=\{G_1,G_2,\ldots,G_n\}$ be a family of graphs of order $n$ on the same vertex set, where $G_i\cong K_1\vee (K_{n-2}\cup K_1)$ for each $1\le i \le n$. Then $\mathcal{G}$ admits a rainbow Hamiltonian cycle unless $G_1=G_2=\cdots =G_n$.
\end{prop}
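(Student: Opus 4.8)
The plan is to understand the structure of $G_i \cong K_1 \vee (K_{n-2} \cup K_1)$ concretely. Each such graph has a dominating vertex (call it the \emph{apex}) adjacent to everything, a clique $K_{n-2}$, and a \emph{pendant-type} vertex of degree $1$ whose unique neighbor is the apex. Since all $G_i$ live on the same labeled vertex set $[n]$, each $G_i$ is determined by an ordered pair $(a_i, b_i)$, where $a_i$ is its apex and $b_i$ its degree-$1$ vertex (and $a_i \neq b_i$). The condition $G_1 = G_2 = \cdots = G_n$ is then exactly the condition that all the pairs $(a_i,b_i)$ coincide. So I would recast the statement as: if the pairs $(a_i,b_i)$ are not all equal, then a rainbow Hamiltonian cycle exists. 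I expect that the key observation is that the available edges are very rich --- in each $G_i$ every pair of vertices except those incident to $b_i$ is an edge, so the only ``forbidden'' edges of color $i$ are the $n-2$ non-edges at $b_i$ (namely $b_i z$ for $z \neq a_i$).

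Next I would reduce to locating where the graphs differ. Since the pairs are not all identical, either two of the apices differ, or two of the pendant vertices differ (or both). I would first handle the generic abundance: a Hamiltonian cycle on $[n]$ uses $n$ edges and we must assign each a distinct color $1,\dots,n$; this is a system-of-distinct-representatives / bipartite-matching problem between the $n$ cycle edges and the $n$ colors, where edge $e$ may receive color $i$ precisely when $e \in E(G_i)$, i.e.\ when $e$ is not one of the forbidden edges at $b_i$ in $G_i$. The plan is to fix a convenient Hamiltonian cycle of the complete graph $K_n$ (every edge is present in at least $n-1$ of the graphs, since $e$ is forbidden in $G_i$ only if $b_i \in e$), and then invoke Hall's condition to color it rainbow. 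Because each color $i$ forbids only edges through a single vertex $b_i$, each cycle edge is forbidden by at most $2$ colors (the two colors whose pendant vertex is one of its endpoints, and only when the other endpoint is not the corresponding apex), so Hall's condition is easy to verify once the $b_i$ are not all the same vertex.

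I would therefore split into two cases. If the pendant vertices $b_i$ are not all equal, I would argue directly via the Hall/matching argument above: the defect in any set of cycle edges is controlled because no single vertex is the pendant vertex of all colors, so the forbidden edges are spread out and every subset of edges collectively has enough admissible colors. If instead all $b_i$ are equal to one common vertex $b$ but the apices $a_i$ are not all equal, then the single degree-$1$ vertex $b$ is the same in every graph, so in a Hamiltonian cycle the two edges at $b$ must both go to an apex: more precisely, each edge $bz$ is present in $G_i$ iff $z = a_i$, so the two cycle-edges at $b$ must be $ba_i$ and $ba_j$ for two distinct colors $i \neq j$ with $a_i \neq a_j$ --- which is available exactly because the apices are not all equal. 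I would then choose the two neighbors of $b$ on the cycle to be two distinct apices, complete the cycle arbitrarily through the clique vertices, and color the remaining $n-2$ edges (all of which lie in the $K_{n-2}$ and are present in every graph) rainbow using the leftover colors via Hall again.

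The main obstacle I anticipate is the coupling between the two edges incident to the common pendant vertex $b$ in the second case: their colors must be chosen first (since each admits essentially one viable color apart from overlap), and one must verify that after committing those two colors to the edges $ba_i, ba_j$, the remaining bipartite matching between the other $n-2$ clique edges and the other $n-2$ colors still satisfies Hall's condition. This should follow because those clique edges are admissible in \emph{all} colors, but it is the step that requires care rather than mere abundance, so I would treat it as the crux and verify Hall's condition explicitly there.
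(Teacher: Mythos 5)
Your overall case split (all pendant vertices equal vs.\ not) matches the paper's, and your second case --- a common pendant vertex $b$ with at least two distinct apices, forcing the two cycle edges at $b$ to be $ba_i$ and $ba_j$ and completing arbitrarily through the clique --- is correct and is essentially the paper's $k=1$ case. The genuine gap is in your first case, where the pendant vertices are not all equal. Your argument rests on two counting claims that are false: that ``every edge is present in at least $n-1$ of the graphs'' and that ``each cycle edge is forbidden by at most $2$ colors.'' Nothing prevents many graphs from sharing the same pendant vertex: for instance, take $b_1=\cdots=b_{n-1}=u$ (with apices $a_1,\ldots,a_{n-1}$) and $b_n=w\neq u$. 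This family falls into your first case, yet an edge $uz$ with $z\notin\{a_1,\ldots,a_{n-1}\}$ is forbidden by all $n-1$ of the first colors. Worse, Hall's condition then genuinely fails for a badly chosen cycle: if the cycle meets $u$ in two edges $uz_1,uz_2$ with $z_1,z_2$ not among the apices $a_1,\dots,a_{n-1}$, then both of these edges are admissible only for color $n$, so no rainbow coloring of that cycle exists. Hence ``fix a convenient cycle and verify Hall'' cannot work for an arbitrary cycle; the routing of the cycle through the high-multiplicity pendant vertices is the actual crux, and your sketch gives no mechanism for it.

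This is precisely what the paper's proof is engineered to do in its $k\ge 2$ case: it orders the distinct pendant vertices $v_1,\ldots,v_k$ by their multiplicities $m_1\le\cdots\le m_k$, builds an explicit rainbow path $v_2v_3\cdots v_n$ whose $i$-th edge avoids the pendant vertex of $G_i$ by construction, and then attaches $v_1$ with the last two colors, performing one or two explicit edge swaps in the degenerate subcases (e.g.\ when $v_1v_2\notin E(G_n)$, which forces $k=2$ and $m_1+m_2=n$). Those subcases correspond exactly to the obstructions your Hall argument would run into, so they cannot be dismissed as routine verification. To repair your proposal you would need to prove a statement of the form: one can choose the two neighbors of each high-multiplicity pendant vertex on the cycle from among the apices of the corresponding colors (or route color $n$ appropriately), and this is in effect the paper's explicit construction rather than an application of Hall's theorem.
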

\begin{proof}
	As remarked, if $G_1 = G_2=\cdots= G_n $ then $\mathcal{G}$ is not rainbow Hamiltonian since $ K_1\vee (K_{n-2}\cup K_1)$ is not Hamiltonian.  We assume now that the graphs in $\mathcal{G}$ are not equal and prove that $\mathcal{G}$ admits a rainbow Hamiltonian cycle.
	Let us denote the vertex degree of vertex $v$ in $G_i$ by $d_i(v)$.
	Suppose that  $G_1,\ldots,G_n$ have   $k$ distinct vertices with degree $1$, and denote these $k$ distinct vertices by $v_1,\ldots,v_k$. We distinguish the cases $k= 1$ and $k\ge 2$.

	We first give the proof for the case $k=1$. Then  all graphs in  $\mathcal{G}$  have $v_1$ with degree $1$. Without loss of generality, we assume that $G_1$ and $G_2$ are different. In this case, this means that the vertex that is adjacent to $v_1$ in $G_1$, $u_1$ say, and the one that is adjacent in $G_2$, $u_2$ say, are different. 
	Since the graph induced on $V\backslash \{v_1\}$ in $G_i$ is the same $K_{n-1}$ for each $1\le i \le n$, it is clear that by using edges $v_1u_1$ in $G_1$ and $v_1u_2$ in $G_2$, we can make a rainbow Hamiltonian cycle in $\mathcal{G}$.

	Next, we consider the case $k\ge 2$. Recall that $v_1,\ldots,v_k$ are the $k$ distinct vertices with degree $1$ in $G_1, \ldots, G_n$. 
	Let  $$\mathcal{M}_j=\{G_i: d_{i}(v_j)=1\}$$ and $m_j=|\mathcal{M}_j|$ for $1\le j \le k$.  
	We let $m_1\le m_2\le  \cdots \le m_k $ by reordering vertices $v_1,\ldots,v_k$ if necessary. Further,
	without loss of generality, we may assume that
	$\mathcal{M}_1=\{G_1,\ldots ,G_{m_1}\},$ 
	$\mathcal{M}_2=\{G_{m_1+1},\ldots ,G_{m_1+m_2}\},\ldots,$
	$\mathcal{M}_{k}=\{G_{m-m_k+1},\ldots ,G_{n}\}.$
	
	Now we let $e_i=v_{i+1}v_{i+2}$, then it follows that $e_i\in E(G_{i})$ for each $1\le i \le n-2$, and hence these edges form a rainbow path $P_{n-2}^R$ (from $v_2$ to $v_n$) in $\{G_1,\ldots,G_{n-2}\}$. 
	We aim to construct a rainbow  Hamiltonian cycle from this path by adding $v_1$ (on two edges, in $G_{n-1}$ and $G_n$) and replacing some edges.
	We will use that $d_{i}(v_n)\neq 1$ for $1\le i\le n-1$ and $d_{n}(v_1)\neq 1$ since $k\ge 2$.
	
	We first claim that  $v_1v_n\in E(G_{n-1})$, otherwise, we have $d_{n-1}(v_1)=1$ because $d_{n-1}(v_n)\neq 1$, which implies that $m_1=n-1$ and $m_2= 1$, a contraction to $m_1\le m_2$.
	
	If $v_1v_2\in E(G_n)$, then adding $e_{n-1}=v_1v_n$ and $e_n=v_1v_2$ to $P_{n-2}^R$ results in a  rainbow Hamiltonian cycle in $\mathcal{G}$. Otherwise, we get $v_1v_2\notin E(G_n )$, which implies that  $d_{n}(v_2)=1$, since $k\ge 2$.  Therefore, $k=2$ and $m_1+m_2=n$. Because $m_1 \le m_2$, this implies that $m_1<n-1$.
	Note that $d_{m_1}(v_1)=1$. Then $v_2v_{m_1+2}\in E(G_{m_1})$. 
	Thus, except for the case $m_1=1$, we have $v_1v_{m_1+1}\in E(G_n)$ since $d_{n}(v_2)=1$. Adding  $e_{n-1}=v_1v_n$ and $e_n=v_1v_{m_1+1}$, replacing $e_{m_1}=v_{m_1+1} v_{m_1+2}$ by $e_{m_1}'=v_2v_{m_1+2}$ and leaving all other edges in $P_{n-2}^R$ results in a  rainbow Hamiltonian cycle in $\mathcal{G}$  (see Fig.~\ref{lmm3.1switch} (a)).  For the remaining case ($m_1=1$ and $v_1v_2 \notin E(G_n)$), suppose that $v_2v_j\in E(G_{n})$ where $j\neq 1,2$. Adding $e_{n-1}=v_1v_n$ and  $e_{n}=v_2v_{j}$, and replacing $e_{j-2}=v_{j-1}v_{j}$ by $e_{j-2}'=v_1v_{j-1}$ and leaving all other edges in $P_{n-2}^R$ results in a  rainbow Hamiltonian cycle in $\mathcal{G}$  (see Fig.~\ref{lmm3.1switch} (b)).
\end{proof}

\begin{figure}[H]\centering\includegraphics[scale=0.43] {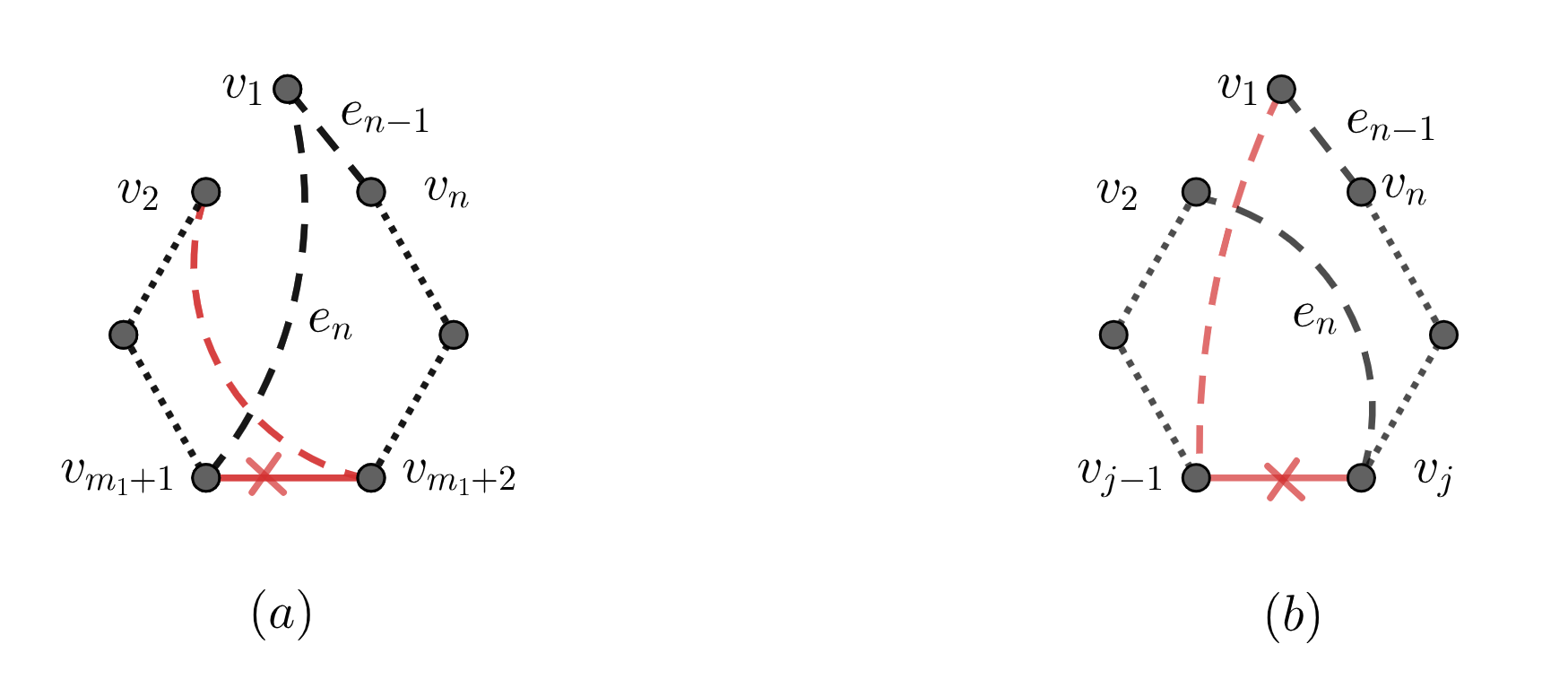}\caption{Rainbow Hamiltonian cycles in Proposition \ref{extgraphlem}}\label{lmm3.1switch}\end{figure}

\section{Spectral radius conditions for rainbow Hamiltonicity}\label{sec:spectralradius}

In order to prove Theorem \ref{thmrho}, we need the following lemma.

\begin{lem}\label{lem:specradiussubG} Let $n \ge 4$ and $G^*\cong K_1\vee (K_{n-2}\cup K_1)$.
	Then $\rho(G) \le n-2$ for every proper subgraph $G$ of $G^*$, with equality only if $G \cong K_{n-1}$ or $G \cong K_{n-1}\cup K_1$.
\end{lem}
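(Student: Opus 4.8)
The plan is to reduce the statement to a comparison of edge counts together with Stanley's inequality $e(H)\ge \rho(H)(\rho(H)+1)/2$ (equivalently $\rho(H)\le \tfrac12(-1+\sqrt{8e(H)+1})$). First I would record the size of $G^{*}$: since $K_1\vee(K_{n-2}\cup K_1)$ consists of a clique $K_{n-2}$, one dominating vertex $w$ adjacent to all $n-1$ other vertices, and one pendant vertex $u$ (adjacent only to $w$), it has $e(G^{*})=\binom{n-2}{2}+(n-1)=\binom{n-1}{2}+1$ edges. Consequently every proper subgraph $G$ of $G^{*}$ satisfies $e(G)\le \binom{n-1}{2}$: if $V(G)=V(G^{*})$ then $G$ omits at least one edge, while if $V(G)\subsetneq V(G^{*})$ then deleting any vertex (each of which has degree at least $1$ in $G^{*}$) removes at least one edge.

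For the upper bound I would feed this into Stanley's inequality. Because $8\binom{n-1}{2}+1=(2n-3)^{2}$, the bound $\tfrac12(-1+\sqrt{8e(G)+1})$ evaluated at $e(G)=\binom{n-1}{2}$ equals $\tfrac12(-1+(2n-3))=n-2$. Since the bound is increasing in $e(G)$ and $e(G)\le \binom{n-1}{2}$, this yields $\rho(G)\le n-2$ for every proper subgraph, as required.

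It remains to pin down the equality cases, which is where the real content lies. If $\rho(G)=n-2$, then both inequalities above are tight: tightness of the monotonicity step forces $e(G)=\binom{n-1}{2}$, and tightness of Stanley's inequality forces $G$ to be extremal for that bound. Here I would invoke the equality characterization of Stanley's inequality: equality holds precisely when $G$ is the disjoint union of a complete graph with some isolated vertices. Writing the complete part as $K_{k}$, the condition $\binom{k}{2}=e(G)=\binom{n-1}{2}$ gives $k=n-1$; and since $G$ has at most $n$ vertices, at most one isolated vertex can remain, so $G\cong K_{n-1}$ or $G\cong K_{n-1}\cup K_1$. Finally I would observe that both of these actually occur as subgraphs of $G^{*}$ (delete the pendant vertex $u$, respectively the pendant edge $wu$) and both have spectral radius $n-2$, so the characterization is sharp.

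The main obstacle is precisely the equality case of Stanley's inequality: the bare inequality only gives $\rho(G)\le n-2$, and to exclude the remaining proper subgraphs with exactly $\binom{n-1}{2}$ edges one genuinely needs that complete graphs plus isolated vertices are the only spectral-radius maximizers for a fixed triangular number of edges. Concretely, the proper subgraphs attaining $e(G)=\binom{n-1}{2}$ other than $K_{n-1}$ and $K_{n-1}\cup K_1$ are exactly those of the form $(K_{n-1}-e)$ with $u$ re-attached as a pendant vertex at $w$; these are connected and non-complete on $n$ vertices, hence not of the extremal form, so $\rho(G)<n-2$. If one prefers a fully self-contained argument in place of citing the refinement of Stanley's bound, I would rule out this family directly, analysing the (constant-sized) equitable partition of $(K_{n-1}-e)+u$ and checking that the largest quotient-matrix eigenvalue stays strictly below $n-2$.
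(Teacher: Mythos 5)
Your proof is correct, and it takes a genuinely different route from the paper. The paper argues spectrally and locally: it reduces to subgraphs obtained from $G^*$ by deleting a single edge, handles edges incident to the dominating vertex via the maximum-degree bound $\rho(G)\le \Delta(G)$ (with the regularity condition for equality yielding $K_{n-1}\cup K_1$), and disposes of the remaining case by computing a four-part equitable partition, checking signs of the characteristic polynomial of the quotient matrix at $n-2$ and $n-3$, and using a trace argument. You instead argue globally by edge counting: every proper subgraph has at most $\binom{n-1}{2}$ edges, Stanley's inequality then gives $\rho(G)\le \tfrac12\bigl(-1+\sqrt{8\binom{n-1}{2}+1}\bigr)=n-2$ at once, and the equality case of Stanley's theorem (equality iff $G$ is a complete graph plus isolated vertices) pins down $K_{n-1}$ and $K_{n-1}\cup K_1$. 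Your version is shorter, needs no case analysis or quotient-matrix computation, and fits naturally with the paper's own observation that Theorem \ref{fnspectral} follows from Theorem \ref{bondysize} via Stanley's bound; its only cost is that it leans on the equality characterization in Stanley's theorem, which is indeed in the cited reference \cite{Stanley} but is a strictly stronger ingredient than the bare inequality the paper quotes, whereas the paper's argument is self-contained. Your closing remark also correctly identifies the graphs that make this stronger ingredient unavoidable for your route: the subgraphs with exactly $\binom{n-1}{2}$ edges of the form $(K_{n-1}-e)$ with a pendant vertex attached, which are connected and non-complete and hence excluded by the equality characterization (or, as you note, by a direct quotient-matrix check, which is essentially what the paper does).
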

\begin{proof}
	To show the bound, it suffices to show that it hold for graphs $G$ that are obtained from $G^*$ by deleting one edge. If this edge is incident with the vertex in $G^*$ that has degree $n-1$, then the maximum degree in $G$ is $n-2$, and hence it follows that $\rho(G) \le n-2$, with equality only if $G$ has a regular component of degree $n-2$; in our case, if $G \cong K_{n-1}\cup K_1$. Further, it is clear that the only other subgraph of this graph with spectral radius $n-2$ is $K_{n-1}$.
    
	In any other case, deleting an edge from $G^*$ results in a graph that admits an equitable partition into four parts with quotient matrix
	$$  Q=  \small{\begin{bmatrix}
			0&1&0&0\\
			1&0&2&n-4\\
			0&1&0&n-4\\
			0&1&2&n-5
	\end{bmatrix}},$$
	that is, for $n \ge 5$ (the case $n=4$ gives $K_{1,3}$, with spectral radius $\sqrt {3}$). It is straightforward to check that if $\psi_n$ is the characteristic polynomial of $Q$, then $\psi_n(n-2)>0$ and $\psi_n(n-3)<0$. Unless $Q$ has three eigenvalues that are larger than $n-3$, this shows that $\rho(G)<n-2$. But the trace of $Q$ equals $n-5$, so three eigenvalues larger than $n-3$ would imply that the remaining eigenvalue is less than $-n$, which is impossible.    
\end{proof}

Now we have all the tools to extend the proof of Theorem \ref{thmsize} and prove Theorem \ref{thmrho}.

\begin{proof}[\textbf{Proof of Theorem \ref{thmrho}}]
	We again write $G^*\cong K_1\vee (K_{n-2}\cup K_1)$, and note that
	if $G_1 = G_2=\cdots= G_n \cong  G^*$, then $\mathcal{G}$ is not rainbow Hamiltonian. 
	
	Next, suppose that $\mathcal{G}$ is not rainbow Hamiltonian with $\rho(G_i)>n-2$ for each $i$. By Lemma \ref{shift_ham}, $\mathcal{K}(\mathcal{G})$ does not admit a rainbow Hamiltonian cycle either and by Lemma \ref{ktrho}, we have $\rho (\mathcal{K}(G_i))\ge \rho (G_i)> n-2$ for each $i$. 
    By the same arguments as in the proof of Theorem \ref{thmsize}, it now follows that  
     $\mathcal{K}(G_i)$ is a subgraph of $G^*$. Because $\rho (\mathcal{K}(G_i))> n-2$, it then follows from Lemma \ref{lem:specradiussubG} that  $\mathcal{K}(G_i)\cong G^*$, for each $i$. Finally, by Proposition \ref{extgraphlem}, we then have that
	$G_1=\cdots=G_n\cong G^*.$  \end{proof}


\section{The signless Laplacian spectral radius}\label{sec:signless}

The \textit{signless Laplacian matrix} of $G$ is defined as $S(G) = D(G) + A(G)$, where $D(G)$ is the diagonal matrix of vertex degrees of $G$. The largest eigenvalue of  $S(G)$, denoted by $\rho_S(G)$, is called the \textit{signless Laplacian spectral radius} of $G$. Note that $S(G)=2A(G) + L(G)$, where $L(G)$ is the usual Laplacian matrix, which is positive semidefinite. Hence $\rho_S(G) \ge 2\rho(G)$.

Yu and Fan \cite{yu2013spectral} observed that Theorem \ref{fnspectral} has a (slightly stronger) analogue in terms of the signless Laplacian spectral radius. They showed that $\rho_S(G) > 2n-4$ implies that $e\ge \binom{n-1}{2}+1$, and hence they obtained the following from Theorem \ref{bondysize}.

\begin{theorem}\cite[Theorem 4.1]{yu2013spectral}\label{yufan}
	Let $n \ge 6$ and $G$ be a graph of order $n$. If
	$\rho_S(G)>2n-4,$
	then $G$ is Hamiltonian unless $G\cong K_1\vee (K_{n-2}\cup K_1).$
\end{theorem}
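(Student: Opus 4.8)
The plan is to reduce the statement to Bondy's theorem (Theorem~\ref{bondysize}) by converting the spectral hypothesis into a lower bound on the number of edges. The target implication is that $\rho_S(G)>2n-4$ forces $e(G)\ge\binom{n-1}{2}+1$. Once this is in hand, Theorem~\ref{bondysize} shows that $G$ is Hamiltonian unless $G\cong K_1\vee(K_{n-2}\cup K_1)$ or $G\cong K_2\vee 3K_1$; since $K_2\vee 3K_1$ has only five vertices it cannot occur when $n\ge 6$, so the sole exception is $K_1\vee(K_{n-2}\cup K_1)$, exactly as claimed.

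I would establish the key implication through its contrapositive: if $e(G)=m\le\binom{n-1}{2}$, then $\rho_S(G)\le 2n-4$. For this I would invoke the upper bound $\rho_S(G)\le \tfrac{2m}{n-1}+n-2$, a standard signless Laplacian estimate (with equality for $K_n$ and for the star). Substituting $m\le\binom{n-1}{2}=\tfrac{(n-1)(n-2)}{2}$ gives $\tfrac{2m}{n-1}\le n-2$, hence $\rho_S(G)\le (n-2)+(n-2)=2n-4$, which is precisely the contrapositive.

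The main obstacle is the bound $\rho_S(G)\le \tfrac{2m}{n-1}+n-2$ itself. My approach would use the complementary identity $S(G)+S(\bar G)=J+(n-2)I$, where $J$ is the all-ones matrix and $\bar G$ the complement (this follows from $D(G)+D(\bar G)=(n-1)I$ and $A(G)+A(\bar G)=J-I$). Taking the quadratic form against a unit Perron vector $x\ge 0$ of $S(G)$ and using $x^\top S(\bar G)x=\sum_{uv\in E(\bar G)}(x_u+x_v)^2$, one gets $\rho_S(G)=(\mathbf{1}^\top x)^2+(n-2)-x^\top S(\bar G)x$. Since $S(\bar G)\succeq 0$, the delicate point is to extract enough from the subtracted term to offset $(\mathbf{1}^\top x)^2$ and to produce the $m$-dependence; the natural tool is a Cauchy--Schwarz estimate on $\sum_{uv\in E(\bar G)}(x_u+x_v)=\sum_v(n-1-d_G(v))x_v$ in terms of the degrees of $\bar G$. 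Alternatively, one may simply quote this inequality as a known result, which is what I expect the cited reference to do.

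Finally, I would verify that the ``unless'' clause is not vacuous, i.e.\ that $K_1\vee(K_{n-2}\cup K_1)$ really satisfies the hypothesis: this graph is connected and contains $K_{n-1}$ as a proper subgraph, so by strict monotonicity of the signless Laplacian spectral radius under edge addition, $\rho_S\big(K_1\vee(K_{n-2}\cup K_1)\big)>\rho_S(K_{n-1})=2(n-2)=2n-4$, confirming that it is a genuine (non-Hamiltonian) exception.
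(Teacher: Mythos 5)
Your proposal is correct and takes essentially the same route as the paper: the paper (following Yu and Fan) also obtains this theorem by converting $\rho_S(G)>2n-4$ into the edge bound $e(G)\ge\binom{n-1}{2}+1$ and then applying Theorem~\ref{bondysize}, with the second Bondy exception $K_2\vee 3K_1$ ruled out because $n\ge 6$. Your extra details --- the contrapositive via the standard estimate $\rho_S(G)\le \tfrac{2m}{n-1}+n-2$ and the verification that $K_1\vee(K_{n-2}\cup K_1)$ genuinely satisfies the hypothesis --- only flesh out what the paper leaves to the cited reference.
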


We note that $n=5$ is again excluded, because the non-Hamiltonian graph  $K_2\vee 3K_1$ has signless spectral radius larger than 6.

A rainbow version of this result is now easily obtained by adjusting our proof of Theorem \ref{thmrho} (and the underlying lemmas) to the signless spectral radius.

\begin{theorem}\label{thmsignless}
	Let $n\ge 6$ and $\mathcal{G}=\{G_1,G_2,\ldots,G_n\}$ be a family of graphs on the same vertex set $[n]$ with
	\begin{align*}
		\rho_S(G_i) > 2n-4,  \mbox{\ for\ $i=1,2,\ldots,n.$}
	\end{align*}
	Then $\mathcal{G}$ admits a rainbow Hamiltonian cycle unless $G_1 = G_2=\cdots= G_n \cong  K_1\vee (K_{n-2}\cup K_1).$
\end{theorem}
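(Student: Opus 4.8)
The plan is to mirror the structure of the proof of Theorem \ref{thmrho} as closely as possible, substituting the signless Laplacian spectral radius $\rho_S$ for the adjacency spectral radius $\rho$ throughout, and replacing each underlying lemma by its signless analogue. The overall skeleton will be: assume for contradiction that $\mathcal{G}$ is not rainbow Hamiltonian with $\rho_S(G_i) > 2n-4$ for every $i$; apply Lemma \ref{shift_ham} to pass to the Kelmans-transformed family $\mathcal{K}(\mathcal{G})$, which also admits no rainbow Hamiltonian cycle; use a monotonicity statement for $\rho_S$ under the Kelmans transformation to conclude $\rho_S(\mathcal{K}(G_i)) \ge \rho_S(G_i) > 2n-4$; invoke a signless analogue of Lemma \ref{cltolem} (and the combinatorial cycle-building argument of Theorem \ref{thmsize}) to force each $\mathcal{K}(G_i)$ to be a subgraph of $G^* \cong K_1\vee (K_{n-2}\cup K_1)$; apply a signless analogue of Lemma \ref{lem:specradiussubG} to upgrade ``subgraph of $G^*$'' to ``$\cong G^*$''; and finally apply Proposition \ref{extgraphlem} verbatim to conclude $G_1 = \cdots = G_n \cong G^*$.

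The first ingredient I would establish is that the Kelmans transformation does not decrease $\rho_S$, i.e., the signless analogue of Lemma \ref{ktrho}: $\rho_S(\mathcal{K}_{xy}(G)) \ge \rho_S(G)$. This should follow from the same Rayleigh-quotient/Perron-vector argument used by Csikv\'ari for the adjacency case, since the Kelmans transformation preserves the number of edges and only redistributes adjacencies in a way that concentrates weight on $x$; one checks that shifting the Perron vector's mass from $y$ to $x$ does not decrease the quadratic form $\mathbf{z}^\top S \mathbf{z}$. The combinatorial steps (Lemmas \ref{shift_ham}, \ref{lem:kelordering}, \ref{cltolem} part on edge sets, and the explicit Hamiltonian cycles in the proof of Theorem \ref{thmsize}) are purely graph-theoretic and carry over unchanged, since they only use that $e(\mathcal{K}(G)) = e(G) \ge \binom{n-1}{2}+1$; here I would note that $\rho_S(G) > 2n-4$ implies $e(G) \ge \binom{n-1}{2}+1$ by the observation of Yu and Fan cited before Theorem \ref{yufan}, so the size-based reasoning applies with no change and forces each $\mathcal{K}(G_i)$ to be a subgraph of $G^*$.

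The genuinely new computation, and the step I expect to be the main obstacle, is the signless analogue of Lemma \ref{lem:specradiussubG}: one must show that $\rho_S(G) \le 2n-4$ for every proper subgraph $G$ of $G^*$, with equality characterized (the natural guess is equality only for $G \cong K_{n-1}$ or $G \cong K_{n-1}\cup K_1$, whose signless radius is $2(n-2) = 2n-4$). Unlike the adjacency case, deleting an edge incident to the dominating vertex no longer simply caps the maximum degree in a clean way, so I would instead handle all one-edge deletions via the equitable partitions: each candidate graph admits an equitable partition, and I would compute the characteristic polynomial $\psi_n^S$ of the corresponding (at most $4\times 4$) quotient matrix, verify $\psi_n^S(2n-4) > 0$ and $\psi_n^S(2n-5) < 0$ (or the appropriate sign pattern), and rule out the possibility of multiple quotient eigenvalues exceeding the threshold by a trace argument identical in spirit to that of Lemma \ref{lem:specradiussubG}. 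The arithmetic here is heavier than the adjacency version because the diagonal degree terms enter the quotient matrix, and the equality analysis at $2n-4$ must be done carefully to isolate exactly $K_{n-1}$ and $K_{n-1}\cup K_1$; handling the small cases $n = 6$ (and checking that $n=5$ is correctly excluded, as it is in Theorem \ref{yufan}) will require separate verification. Once this lemma is in hand, the conclusion $\mathcal{K}(G_i) \cong G^*$ and the final appeal to Proposition \ref{extgraphlem} close the argument exactly as in the proof of Theorem \ref{thmrho}.
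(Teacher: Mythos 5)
Your proposal is correct and takes essentially the same route as the paper: the paper's own proof of Theorem \ref{thmsignless} is precisely this adaptation of Theorem \ref{thmrho} --- keep Lemma \ref{shift_ham} and Proposition \ref{extgraphlem} unchanged, replace Lemma \ref{ktrho} by signless Kelmans monotonicity, use the Yu--Fan observation that $\rho_S(G)>2n-4$ forces $e(G)\ge\binom{n-1}{2}+1$ so that Lemma \ref{cltolem} and the combinatorial argument of Theorem \ref{thmsize} carry over (with the $(n,j)=(5,3)$ exception harmless since $n\ge 6$), and replace Lemma \ref{lem:specradiussubG} by its signless analogue. The computation you flag as the main obstacle does work out: for the deletion of a clique edge the signless quotient matrix $Q_S$ satisfies $\det\bigl((2n-4)I-Q_S\bigr)=2(3n-5)(n-4)>0$, and since signless Laplacian eigenvalues are nonnegative and $\operatorname{trace}(Q_S)=4n-10<2(2n-4)$, at most one eigenvalue of $Q_S$ can exceed $2n-4$, which closes that case even more cleanly than in the adjacency setting.
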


\section*{Acknowledgements}
The authors would like to thank the anonymous referees for their helpful comments.

\bibliography{rbh}
\bibliographystyle{plain}	

\end{spacing}
\end{document}